\documentclass[a4paper, reqno, 12pt]{amsart}

\usepackage[usenames,dvipsnames]{color}
\usepackage{amsthm,amsfonts,amssymb,amsmath,amsxtra}
\usepackage[all]{xy}
\SelectTips{cm}{}
\usepackage{xr-hyper}
\usepackage[colorlinks=
   citecolor=Black,
   linkcolor=Red,
   urlcolor=Blue]{hyperref}
\usepackage{verbatim}

\usepackage[margin=1.25in]{geometry}
\usepackage{mathrsfs}

\RequirePackage{xspace}
\RequirePackage{etoolbox}
\RequirePackage{varwidth}
\RequirePackage{enumitem}
\RequirePackage{tensor}
\RequirePackage{mathtools}
\RequirePackage{longtable}
\RequirePackage{multirow}

\setcounter{tocdepth}{1}

\def\ge{\geqslant}
\def\le{\leqslant}
\def\a{\alpha}

\def\g{\gamma}
\def\G{\Gamma}
\def\d{\delta}
\def\D{\Delta}

\def\o{\omega}

\def\s{\sigma}
\def\t{\tau}

\def\k{\kappa}
\def\l{\lambda}

\def\i{^{-1}}
\def\co{\mathcal O}

\def\<{\langle}
\def\>{\rangle}

\newcommand{{\BG}}{\ensuremath{\mathbb {G}}\xspace}

\newcommand{{\BK}}{\ensuremath{\mathbb {K}}\xspace}

\newcommand{\BN}{\ensuremath{\mathbb {N}}\xspace}

\newcommand{\BQ}{\ensuremath{\mathbb {Q}}\xspace}
\newcommand{\BR}{\ensuremath{\mathbb {R}}\xspace}
\newcommand{\BS}{\ensuremath{\mathbb {S}}\xspace}

\newcommand{\BZ}{\ensuremath{\mathbb {Z}}\xspace}

\DeclareMathOperator{\Gal}{Gal}

\DeclareMathOperator{\rank}{rank}

\def\tW{\tilde W}
\def\tS{\tilde \BS}

\DeclareMathOperator{\supp}{supp}

%
\newtheorem{theorem}{Theorem}
\newtheorem{proposition}[theorem]{Proposition}

\newtheorem{corollary}[theorem]{Corollary}

\theoremstyle{definition}

\newtheorem{remark}[theorem]{Remark}

\numberwithin{equation}{section}
\numberwithin{theorem}{section}


\setitemize[0]{leftmargin=*,itemsep=\the\smallskipamount}
\setenumerate[0]{leftmargin=*,itemsep=\the\smallskipamount}

\renewcommand{\to}{%
   \ifbool{@display}{\longrightarrow}{\rightarrow}%
   }
\let\shortmapsto\mapsto
\renewcommand{\mapsto}{%
   \ifbool{@display}{\longmapsto}{\shortmapsto}%
   }
\newlength{\olen}
\newlength{\ulen}
\newlength{\xlen}
\newcommand{\xra}[2][]{%
   \ifbool{@display}%
      {\settowidth{\olen}{$\overset{#2}{\longrightarrow}$}%
       \settowidth{\ulen}{$\underset{#1}{\longrightarrow}$}%
       \settowidth{\xlen}{$\xrightarrow[#1]{#2}$}%
       \ifdimgreater{\olen}{\xlen}%
          {\underset{#1}{\overset{#2}{\longrightarrow}}}%
          {\ifdimgreater{\ulen}{\xlen}%
             {\underset{#1}{\overset{#2}{\longrightarrow}}}
             {\xrightarrow[#1]{#2}}}}%
      {\xrightarrow[#1]{#2}}
   }
\makeatother
\newcommand{\xyra}[2][]{%
   \settowidth{\xlen}{$\xrightarrow[#1]{#2}$}%
   \ifbool{@display}%
      {\settowidth{\olen}{$\overset{#2}{\longrightarrow}$}%
       \settowidth{\ulen}{$\underset{#1}{\longrightarrow}$}%
       \ifdimgreater{\olen}{\xlen}%
          {\mathrel{\xymatrix@M=.12ex@C=3.2ex{\ar[r]^-{#2}_-{#1} &}}}%
          {\ifdimgreater{\ulen}{\xlen}%
             {\mathrel{\xymatrix@M=.12ex@C=3.2ex{\ar[r]^-{#2}_-{#1} &}}}
             {\mathrel{\xymatrix@M=.12ex@C=\the\xlen{\ar[r]^-{#2}_-{#1} &}}}}}%
      {\mathrel{\xymatrix@M=.12ex@C=\the\xlen{\ar[r]^-{#2}_-{#1} &}}}%
   }
\makeatletter
\newcommand{\xla}[2][]{%
   \ifbool{@display}%
      {\settowidth{\olen}{$\overset{#2}{\longleftarrow}$}%
       \settowidth{\ulen}{$\underset{#1}{\longleftarrow}$}%
       \settowidth{\xlen}{$\xleftarrow[#1]{#2}$}%
       \ifdimgreater{\olen}{\xlen}%
          {\underset{#1}{\overset{#2}{\longleftarrow}}}%
          {\ifdimgreater{\ulen}{\xlen}%
             {\underset{#1}{\overset{#2}{\longleftarrow}}}
             {\xleftarrow[#1]{#2}}}}%
      {\xleftarrow[#1]{#2}}
   }
\newcommand{\isoarrow}{%
   \ifbool{@display}{\overset{\sim}{\longrightarrow}}{\xrightarrow\sim}%
   }
   
\begin{document}

\title[]{Cordial elements and dimensions of affine Deligne-Lusztig varieties}
\author[X. He]{Xuhua He}
\address[]{The Institute of Mathematical Sciences and Department of Mathematics, The Chinese University of Hong Kong, Shatin, N.T., Hong Kong.}
\email{xuhuahe@gmail.com}
\thanks{}

\keywords{Affine Deligne-Lusztig varieties}
\subjclass[2010]{14L05, 20G25}

\date{\today}

\begin{abstract}
The affine Deligne-Lusztig variety $X_w(b)$ in the affine flag variety of a reductive group $\mathbb G$ depends on two parameters: the $\s$-conjugacy class $[b]$ and the element $w$ in the Iwahori-Weyl group $\tW$ of $\mathbb G$. In this paper, for any given $\s$-conjugacy class $[b]$, we determine the nonemptiness pattern and the dimension formula of $X_w(b)$ for most $w \in \tW$. 
\end{abstract}

\maketitle

\tableofcontents

\section{Introduction}
\subsection{Motivation} The notion of affine Deligne-Lusztig variety was first introduced by Rapoport in \cite{Ra-guide}. It plays an important role in arithmetic geometry and the Langlands program. One of the main motivations comes from the reduction of Shimura varieties. We focus on the affine Deligne-Lusztig varieties in the affine flag variety in this paper. In this case, the affine Deligne-Lusztig varieties are closely related to the Shimura varieties with Iwahori level structure. On the special fibers, there are two important stratifications: 

\begin{itemize}

\item Newton stratification, indexed by specific $\s$-conjugacy classes $[b]$ in the associated $p$-adic group. 

\item Kottwitz-Rapoport stratification, indexed by specific elements $w$ in the associated Iwahori-Weyl group. 
\end{itemize}

A fundamental question is to determine when the intersection of a Newton stratum indexed by $[b]$ and a Kottwitz-Rapoport stratum indexed by $w$ is nonempty and to determine its dimension. Such intersection is closely related to the affine Deligne-Lusztig variety $X_w(b)$, see for instance \cite{HR}. In a parallel story over function fields, affine Deligne-Lusztig varieties also arise naturally in the study of local shtukas, see for instance \cite{HV}. 

Motivated by the study of Shimura varieties and the local shtukas, one would like to understand the following fundamental questions on the affine Deligne-Lusztig varieties: 

\begin{itemize}
\item When is the affine Deligne-Lusztig variety nonempty?

\item If nonempty, what is its dimension?
\end{itemize}

It is also worth pointing out that much information on the affine Deligne-Lusztig varieties in the partial affine flag varieties (which are closely related to Shimura varieties with other parahoric level structures) can be deduced from the information on the affine Deligne-Lusztig varieties in the affine flag variety. 

\subsection{The main result} In this paper we determine, for any given $\s$-conjugacy class $[b]$, the nonemptiness pattern and dimension formula of $X_w(b)$ for most $w$ in the Iwahori-Weyl group $\tW$. To state the result, we introduce some notations first. For simplicity, we will only consider the split groups $\BG$ here. The general case will be studied in the main context. 

The Iwahori-Weyl group $\tW$ is the semidirect product of the coweight lattice with the relative Weyl group $W_0$. We may write $\tW$ as $\tW=\sqcup_{\l \text { is dominant}} W_0 t^\l W_0$. For any $w \in W_0 t^\l W_0$, we set $\l_w=\l$. The $\s$-conjugacy classes $[b]$ are classified by Kottwitz \cite{Ko1} and \cite{Ko2} via the two invariants: the image under the Kottwitz map $\k$ and the Newton point $\nu_b$ (which is a dominant rational coweight). By the Mazur's inequality for the affine Deligne-Lusztig varieties in the affine Grassmannian \cite{Ga}, we deduce that if $X_w(b) \neq \emptyset$, then $\k(w)=\k(b)$ and $\l_w \ge \nu_b$ with respect to the dominance order of the rational coweights. 

The converse, however, is far from being true. The main result of this paper is the following. 

\begin{theorem}
Let $w \in \tW$. Suppose that $w$ is in a Shrunken Weyl chamber. If $\k(w)=\k(b)$, $\l_w-\nu_b$ is a linear combination of the simple coroots with all the coefficients positive, and $\l_w^{\flat\mkern-2.4mu\flat} \ge \nu_b$, then we have a complete description of the nonemptiness pattern and dimension formula for $X_w(b)$. 
\end{theorem}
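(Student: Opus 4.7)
The plan is to attack the theorem by introducing the notion of a \emph{cordial element} in $\tW$ (as suggested by the title) and showing that any $w$ satisfying the hypotheses is cordial, meaning its nonemptiness pattern and dimension are governed by the obvious necessary conditions coming from Mazur's inequality on $\lambda_w - \nu_b$, the Kottwitz map, and the defect $\textup{def}(b)$. In broad strokes, the statement then reduces to a combinatorial verification on the root datum of $\BG$.

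My first step would be to establish a clean reduced-expression description for $w$ in the Shrunken Weyl chamber: writing $w = x t^{\lambda_w} y$ with $x,y \in W_0$ so that $\ell(w) = \ell(x) + \langle \lambda_w, 2\rho\rangle + \ell(y)$, after identifying the dominant representative. The shrunken condition guarantees that the Iwahori factorization of $w$ is sufficiently generic that this length-additive form is available, and moreover that any simple affine reflection can be used to reduce the length of $w$ on one side without colliding with $t^{\lambda_w}$. This is the combinatorial engine behind every later manipulation.

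The second step is to apply the Deligne--Lusztig reduction method for affine Deligne--Lusztig varieties (due to He, using iterated $\s$-conjugation by simple reflections) to produce a controlled decomposition of $X_w(b)$ into pieces indexed by shorter elements $w' \in \tW$. Once one reaches an element of the form $t^\mu$ (or a minimal-length representative of a $\s$-conjugacy class in $\tW$), the classical results of Gashi, Mazur, Kottwitz--Rapoport, and Viehmann in the affine Grassmannian give both nonemptiness and the dimension. The shrunken assumption keeps the reduction tree within the range where each branch contributes predictably; the coroot-positivity condition $\lambda_w - \nu_b \in \sum_i \BQ_{>0}\, \a_i^\vee$ ensures that no branch drops to a stratum which lies outside the Mazur cone, so no spurious cancellations occur. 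The hypothesis $\lambda_w^{\flat\mkern-2.4mu\flat} \ge \nu_b$ ensures that the shifted weights appearing along the reduction continue to dominate $\nu_b$, which is what keeps the fibers of the reduction map nonempty.

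The dimension formula will then be assembled by summing $\ell(x) + \ell(y)$ (the "extra" contribution coming from moving $w$ into the affine Grassmannian) with the dimension of the affine Grassmannian ADLV for $\mu$ and $[b]$, which is $\langle \nu_b, 2\rho \rangle - \tfrac{1}{2}\textup{def}(b)$ or its shift. The hardest step, which I expect to be the main obstacle, is the dimension \emph{upper} bound: the lower bound follows quickly from explicit curve constructions and class-polynomial positivity, but showing that no branch of the reduction exceeds the expected dimension requires pinning down exactly which $\s$-conjugacy orbits of simple reflections can act non-trivially on $w$. This is where the Shrunken Weyl chamber assumption is essential, as it precludes pathological reductions that would leave the cordial range and force case analysis outside the generic pattern.
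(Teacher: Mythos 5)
Your proposal inverts the actual difficulty and misses the paper's key new idea. The upper bound $\dim X_w(b) \le d_w(b)$ is \emph{not} the hard part: it is already available as the virtual-dimension bound from \cite{He14} and \cite{He-CDM} (Theorem \ref{dim-vir} in the paper), for arbitrary $w$ and $[b]$. What the paper must establish is the matching \emph{lower} bound together with nonemptiness, and this is precisely where your proposed strategy breaks down. You propose running Deligne--Lusztig reduction from $w$ all the way to $t^\mu$ or to a minimal-length representative of a $\s$-conjugacy class, then quoting Gashi/Mazur and Viehmann. But for nonbasic $[b]$, no one knows how to build such a reduction path explicitly (the paper explicitly flags this as ``very challenging'' and notes it has only been done by computer in small rank). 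Without an explicit reduction path the lower bound does not follow, and ``class-polynomial positivity'' and ``explicit curve constructions'' do not plug the gap --- the dimension$=$degree theorem gives information only once the class polynomial is computed, which is exactly what is intractable here.

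The paper's new idea, which your proposal does not contain, is to stop the reduction short of minimal-length elements: one produces a new family of \emph{cordial} elements, namely $x t^\lambda$ with $\lambda$ dominant and $x \in W_0$, proves they are cordial with $[b_{x t^\lambda}]=[\dot t^\lambda]$ (via Mazur's inequality and closure relations $\breve I \dot t^\l \breve I \subset \overline{\breve I \dot x \dot t^\l \breve I} \subset \overline{\breve I \dot w_\BS \dot t^\l \breve I}$), and then constructs an explicit chain in the relation $\Rightarrow_\s$ from the shrunken $w$ to such an element $a t^\gamma$ with $\gamma \ge_\BZ \l_w^{\flat\mkern-2.4mu\flat}$ and $\supp_\s(a)=\BS$. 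The hypothesis $\l_w^{\flat\mkern-2.4mu\flat} \ge \nu_b$ is not about keeping ``fibers of the reduction nonempty''; it is there so that $\gamma^\diamondsuit \ge \nu_b$, hence $[b] \le [\dot t^\gamma] = [b_{a t^\gamma}]$, which is what lets Theorem \ref{cordial} (Mili\'cevi\'c--Viehmann's cordiality theorem) apply and give $X_{a t^\gamma}(b) \neq \emptyset$ and $\dim X_{a t^\gamma}(b)=d_{a t^\gamma}(b)$. The relation $\Rightarrow_\s$ then propagates this back to $w$.

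Also note that the theorem does \emph{not} say $w$ is cordial under the hypotheses, and you should not try to prove that: cordiality of $w$ would require controlling $X_w(b_w)$, but $[b_w]$ may fail the hypotheses of the theorem (in particular, $\l_w^\diamondsuit-\nu_{b_w}$ need not have all coefficients strictly positive). Finally, your emptiness argument is left vague; the paper handles it concretely via the theory of $(J,x,\s)$-alcove elements from \cite{GHKR2} and \cite{GHN}: when $\supp_\s(\eta_\s(w))=J \subsetneq \BS$, one checks $w$ is a $(J,\s\i(y),\s)$-alcove element and the strict-positivity hypothesis on $\l_w^\diamondsuit-\nu_b$ forces a Kottwitz-point mismatch in $\mathbb M_J$, giving $X_w(b)=\emptyset$.
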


We refer to \S\ref{ADLV} for the definition of Shrunken Weyl chambers, \S\ref{double-flat1} for the definition of  $-^{\flat\mkern-2.4mu\flat}$ and Theorem \ref{main} for the precise description of the nonemptiness pattern and dimension formula. These assumptions are satisfied for example when $\l_w \ge \nu_b+2 \rho^\vee$, where $\rho^\vee$ is the half sum of positive coroots. See Corollary \ref{7.4}. 

\subsection{Some previous results}
In \cite[Conjecture 9.5.1]{GHKR2}, G\"ortz, Haines, Kottwitz and Reuman made several influential conjectures on the nonemptiness pattern and dimension formula of $X_w(b)$.  

First, for the basic $\s$-conjugacy class $[b]$, they gave a conjecture in \cite[Conjecture 9.5.1 (a)]{GHKR2} on the nonemptiness pattern and dimension formula for $X_w(b)$ for $w$ in the Shrunken Weyl chamber. This conjecture was established in \cite{He14}. For $X_w(b)$ with $[b]$ basic, and $w$ outside the Shrunken Weyl chamber, in \cite[Conjecture 9.4.2]{GHKR2} they gave a conjecture on the nonemptiness pattern. This conjecture is established in \cite{GHN}. But for $[b]$ basic and $w$ outside the Shrunken Weyl chamber, no conjectural dimension formula of $X_w(b)$ has even been formulated so far. 

For arbitrary $\s$-conjugacy class $[b]$, they made an interesting conjecture in \cite[Conjecture 9.5.1 (b)]{GHKR2} which predicts the difference of the dimensions of $X_w(b)$ and $X_w(b_{basic})$, where $[b_{basic}]$ is the unique basic $\s$-conjugacy class such that $\k(b)=\k(b_{basic})$. In this conjecture, $w$ is not required to be shrunken, but the length of $w$ is required to be big enough with some (unspecified) lower bound. In the later works, we studied $X_w(b)$ via a somehow different direction. First, the assumption that $w$ is in the Shrunken Weyl chamber is added, as even for the basic $\s$-conjugacy classes, the dimension formula for the affine Deligne-Lusztig varieties with $w$ outside the shrunken Weyl chamber is still very mysterious. Second, one would like to have a specific lower bound on $w$. 

For split groups and the case where $[b]$ is represented by translation elements, under the ``very shrunken'' assumption, the nonemptiness pattern and the dimension formula of $X_w(b)$ were given in \cite[Theorem 2.28 \& Theorem 2.34]{He-CDM}. A similar result were obtained in \cite{MST} under a different condition on $w$. 

For other nonbasic $\s$-conjugacy classes, little is known so far on the nonemptiness pattern and dimension formula of $X_w(b)$. 

\subsection{Old Strategies}
We discuss several strategies used to study the nonemptiness pattern and dimension formula for $X_w(b)$ in the previous works. 

The emptiness pattern is established via the method of $P$-alcove elements introduced in \cite{GHKR2}. The upper bound of $\dim X_w(b)$ is given by the virtual dimension $d_w(b)$ introduced in \cite{He14}. 

In \cite{He14}, we combined the Deligne-Lusztig reduction with some remarkable properties of minimal length elements in their conjugacy classes in $\tW$ to establish a method to compute $\dim X_w(b)$ for arbitrary $w$ and arbitrary $[b]$. As a consequence, we establish the ``dimension=degree'' theorem which relates the dimension of affine Deligne-Lusztig varieties with the degree of the class polynomials of the affine Hecke algebras. However, the computation of the class polynomials, in general, is extremely difficult. The ``dimension=degree'' theorem does not lead to explicit descriptions of the nonemptiness pattern and the dimension formula of $X_w(b)$. 

For basic $[b]$, assume that $X_w(b) \neq \emptyset$. It remains to show that $\dim X_w(b)$ reaches the upper bound $d_w(b)$. Note that for any Coxeter element $c$, $\dim X_c(b)$ is easy to compute. This will be used as the starting point. In \cite{He14}, we constructed an explicit ``reduction path'' from an element $w$ in the shrunken Weyl chamber to an element $w'$ with finite part a Coxeter element. By \cite{HY}, the minimal length elements in the conjugacy class of $w'$ in $\tW$ are the Coxeter elements $c$. This gives a ``reduction path'' from $w$ to $c$ and thus leads to a lower bound of $\dim X_w(b)$. Fortunately, the lower bound also equals to the virtual dimension $d_w(b)$. Thus we proved the nonemptiness pattern and the dimension formula of $X_w(b)$ with basic $[b]$. 

For split groups and the case where $[b]$ is represented by translation elements, in \cite{He-CDM} we used the superset method of \cite{GHKR2} to relate the nonemptiness pattern and dimension formula of $X_w(b)$ with $X_{w'}(1)$ for a given $w'$. Note that $[1]$ is a basic $\s$-conjugacy class. We then used the result on $X_{w'}(1)$ established in \cite{He14} to obtain the desired result on $X_w(b)$. A very different approach is introduced in \cite{MST}, where the authors used the alcove walk and Littelman path to study the nonemptiness pattern and dimension formula of $X_w(b)$.

I do not know how/whether the methods in \cite{He-CDM} or in \cite{MST} for the translation elements may be generalized to arbitrary $\s$-conjugacy class $[b]$. The reduction method introduced in \cite{He14}, in theory, works for an arbitrary $\s$-conjugacy class $[b]$. However, to construct an explicit ``reduction path'' from a given $w$ to a minimal length element associated to a nonbasic $[b]$ is very challenging. Q. Yu has written down a computer program to construct the ``reduction path'' for groups with small ranks. But so far it is not clear how such a ``reduction path'' may be constructed in general. 

\subsection{New strategy}
The new strategy in this paper is as follows. Instead of using minimal length elements as the starting point, we use the cordial elements introduced by Mili\'cevi\'c and Viehmann in \cite{MV} as the starting point instead. In section \ref{3}, we construct a new family of cordial elements. For any element $w'$ in this family, $\dim X_{w'}(b)$ equals to the virtual dimension. We then construct in section \ref{4} an explicit ``reduction path'' from an element $w$ in the shrunken Weyl chamber to an element in this family. This is where the assumption $\l_w^{\flat\mkern-2.4mu\flat} \ge \nu_b$ is used. This shows that $\dim X_w(b) \ge d_w(b)$. Finally we use the result that $\dim X_w(b) \le d_w(b)$ established in \cite{He14} and \cite{He-CDM} to prove the desired nonemptiness pattern and the dimension formula of $X_w(b)$.  

\subsection{Acknowledgment} We thank S. Nie, E. Viehmann and Q. Yu for helpful discussions. The paper was written down during the visit to H. Bao at NUS. We thank the excellent working environment it provides. We also thank U. G\"ortz, S. Nie and E. Viehmann for useful comments on a previous version of the paper. 
 
\section{Preliminary}

\subsection{The reductive group $\BG$ and its Iwahori-Weyl group}
Let $F$ be a non-archimedean local field and $\breve F$ be the completion of the maximal unramified extension of $F$. We write $\Gamma$ for $\Gal(\overline F/F)$, and write $\Gamma_0$ for the inertia subgroup of $\Gamma$.

Let $\mathbb G$ be a connected reductive group over $F$. Let $\s$ be the Frobenius morphism of $\breve F/F$. We write $\breve G$ for $\mathbb G(\breve F)$. We use the same symbol $\s$ for the induced Frobenius morphism on $\breve G$. 

We fix a maximal $\breve F$-split torus $S$ in $\mathbb G$ defined over $F$ which contains a maximal $F$-split torus. Let $T$ be the centralizer of $S$ in $\mathbb G$. Then $T$ is a maximal torus. Let $\mathcal A$ be the apartment of $\mathbb G_{\breve F}$ corresponding to $S_{\breve F}$. Thus $\mathcal A$ is (non-canonically) isomorphic to $V=X_*(T)_{\Gamma_0} \otimes_{\mathbb Z} \mathbb R$. The Frobenius $\s$ naturally acts on $\mathcal A$. We fix a $\s$-stable alcove $\mathfrak a$ in $\mathcal A$, and let $\breve I \subset \breve G$ be the Iwahori subgroup corresponding to $\mathfrak a$. Thus $\breve I$ is $\s$-stable. 

We denote by $N$ the normalizer of $T$ in $\mathbb G$. The \emph{relative Weyl group} $W_0$ is defined to be $N(\breve F)/T(\breve F)$. The \emph{Iwahori--Weyl group} (associated to $S$) is defined as 
$$\tW= N(\breve F)/T(\breve F) \cap \breve I.$$ For any $w \in \tW$, we choose a representative $\dot w$ in $N(L)$. 

We have a natural short exact sequence $0 \to X_*(T)_{\Gamma_0} \to \tW \to W_0 \to 0$. We choose a special vertex of $\mathfrak a$ and represent $\tW$ as a semidirect product $$\tW=X_*(T)_{\G_0} \rtimes W_0=\{t^\l w; \l \in X_*(T)_{\G_0}, w \in W_0\}.$$

The Iwahori-Weyl group $\tW$ contains the affine Weyl group $W_a$ as a normal subgroup and we have $$\tW=W_a \rtimes \Omega,$$ where $\Omega$ is the normalizer of $\mathfrak a$. The length function $\ell$ and Bruhat order $\le$ on $W_a$ extend in a natural way to $\tW$. The Frobenius $\s$ naturally acts on $\tW$, in such a way that the subset $\tS \subset \tW$ is stable. 

For any $K \subset \tS$, we denote by $W_K$ the subgroup of $\tW$ generated by $s \in K$. Let ${}^K \tW$ (resp. $\tW^K$) be the set of minimal length elements in their cosets in $W_K \backslash \tW$ (resp. $\tW/W_K$). 

Let $\BS \subset \tS$ be the set of simple reflections of $W_0$. Since $\s$ preserves $W_0$, it also preserves $\BS$. By convention, the dominant Weyl chamber of $V$ is opposite to the unique Weyl chamber containing $\mathfrak a$. Let $\D$ be the set of relative simple roots determined by the dominant Weyl chamber. Then $\s(\D)=\D$. For any $s \in \BS$, we denote by $\a_s \in \D$ the corresponding simple root and $\a_s^\vee$ the corresponding simple coroot. We denote by $w_\BS$ the longest element of $W_0$. 

We define the $\s$-conjugation action on $\breve G$ by $g \cdot_\s g'=g g' \s(g) \i$. Let $B(\mathbb G)$ be the set of $\s$-conjugacy classes on $\breve G$. The classification of the $\s$-conjugacy classes is obtained by Kottwitz in \cite{Ko1} and \cite{Ko2}. Any $\s$-conjugacy class $[b]$ is determined by two invariants: 
\begin{itemize}
	\item The element $\k([b]) \in \Omega_{\s}$; 
	
	\item The Newton point $\nu_b \in \big((X_*(T)_{\Gamma_0, \BQ})^+\big)^{\langle\sigma\rangle}$. 
\end{itemize}

Here $-_\s$ denotes the $\s$-coinvariants, 
$(X_*(T)_{\Gamma_0, \BQ})^+$ denotes the intersection of  $X_*(T)_{\Gamma_0}\otimes \BQ=X_*(T)^{\Gamma_0}\otimes \BQ$ with the set $X_*(T)_\BQ^+$ of dominant elements in $X_*(T)_\BQ$; the action of $\sigma$ on $(X_*(T)_{\Gamma_0, \BQ})/W_0$ is transferred to an action on $(X_*(T)_\BQ)^+$ ({\it L-action}). 

For any $w \in \tW$, we write $\k(w)$ for $\k(\dot w)$. It is easy to see that $\k(w)$ is independent of the choice of the representative $w$. 

\subsection{Affine Deligne-Lusztig varieties}\label{ADLV} We have the following generalization of the Bruhat decomposition $$\breve G=\sqcup_{w \in \tW} \breve I \dot w \breve I,$$ due to Iwahori and Matsumoto \cite{IM} in the split case, and to Bruhat and Tits \cite{BT} in the general case. Let $Fl=\breve G/\breve I$ be the {\it affine flag variety}. For any $b \in \breve G$ and $w \in \tW$, we define the corresponding {\it affine Deligne-Lusztig variety} in the affine flag variety $$X_w(b)=\{g I \in \breve G/\breve I; g \i b \s(g) \in \breve I \dot w \breve I\} \subset Fl.$$

As discussed in \cite[\S 2]{GHN}, the study of nonemptiness pattern and dimension formula of affine Deligne-Lusztig varieties for arbitrary reductive group may be reduced to simple and quasi-split groups over $F$. From now on, we assume that $\mathbb G$ is simple and quasi-split over $F$. In this case, the $\s$-action on $\tW$ preserves $W_0$ and $X_*(T)_{\G_0}$ respectively. 

Now we recall the definition of virtual dimension in \cite[\S10.1]{He14}. 

Note that any element $w \in \tW$ may be written in a unique way as $w=x t^\mu y$ with $\mu$ dominant, $x, y \in W_0$ such that $t^\mu y \in {}^{\BS} \tW$. In this case, we set $$\eta_\s(w) = \s^{-1}(y) x.$$

Let $\mathbb J_b$ be the reductive group over $F$ with $\mathbb J_b(F)=\{g \in \breve G; g b \s(g) \i=b\}.$ The \emph{defect} of $b$ is defined by $\text{def}(b)=\rank_F \mathbb G-\rank_F \mathbb J_b.$ Here for a reductive group $\mathbb H$ defined over $F$, $\rank_F$ is the $F$-rank of the group $\mathbb H$. Let $\rho$ be the dominant weight with $\<\a^\vee, \rho\>=1$ for any $\a \in \D$. The \emph{virtual dimension} is defined to be $$d_w(b)=\frac 12 \big( \ell(w) + \ell(\eta_\s(w)) -\text{def}(b)  \big)-\<\nu_b, \rho\>.$$

The following result is proved in \cite[Corollary 10.4]{He14} for residually split groups and proved in \cite[Theorem 2.30]{He-CDM} for the general case. 

\begin{theorem}\label{dim-vir}
Let $b \in \breve G$ and $w \in \tW$. Then $\dim X_w(b) \le d_w(b)$. 
\end{theorem}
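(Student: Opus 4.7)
My plan is to prove the upper bound by induction on the length $\ell(w)$, using the Deligne-Lusztig reduction method of \cite{He14}. Recall that for any $s\in\tS$, this reduction yields the following dichotomy:
\begin{itemize}
\item if $\ell(sws^{\s})=\ell(w)$, then $X_w(b)$ is universally homeomorphic to $X_{sws^{\s}}(b)$, so $\dim X_w(b)=\dim X_{sws^{\s}}(b)$;
\item if $\ell(sws^{\s})<\ell(w)$ (equivalently $\ell(sw)<\ell(w)$), then $X_w(b)$ admits a decomposition into locally closed pieces, one being an $\mathbb A^1$-bundle over $X_{sw}(b)$ and the other a $\mathbb G_m$-bundle over $X_{sws^{\s}}(b)$.
\end{itemize}
Consequently in the length-reducing case, $\dim X_w(b)\le \max(\dim X_{sw}(b)+1,\dim X_{sws^{\s}}(b)+1)$. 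Iterating this, one can eventually reach a minimal length element $w'$ in its $\s$-conjugacy class $\mathcal O$ in $\tW$.

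The base case. For such $w'$, the main result of \cite{He14} asserts that $X_{w'}(b)$ is empty unless the class $\mathcal O$ meets $[b]$, in which case $\dim X_{w'}(b)=\ell(w')-\langle\nu_b,2\rho\rangle+\tfrac12(\ell(\eta_\s(w'))+\ell(w')-2\ell(w'))$ reduces, via the characterization of minimal length elements in terms of straight elements and finite reflections, to a formula which one may check equals $d_{w'}(b)$. The key input is that for a minimal length $w'$ representing $[b]$, one has $\ell(w')=\langle\nu_b,2\rho\rangle+\ell(\eta_\s(w'))-\mathrm{def}(b)$, so the inequality $\dim X_{w'}(b)\le d_{w'}(b)$ becomes an equality.

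The inductive step. I would verify the parallel recursions for $d_w(b)$: namely, $d_{sws^{\s}}(b)=d_w(b)$ when $\ell(sws^{\s})=\ell(w)$, and both $d_{sw}(b)+1\le d_w(b)$ and $d_{sws^{\s}}(b)+1\le d_w(b)$ in the length-reducing case. Since $\mathrm{def}(b)$ and $\langle\nu_b,\rho\rangle$ do not depend on $w$, this reduces to a purely combinatorial claim about the behavior of $\ell(w)+\ell(\eta_\s(w))$ under the involutions $w\mapsto sws^{\s}$ and the passages $w\mapsto sw$. I would establish this by a case-by-case analysis using the identification $\eta_\s(w)=\s^{-1}(y)x$ for the standard decomposition $w=xt^\mu y$ with $t^\mu y\in{}^{\BS}\tW$, and tracking how the Bruhat decomposition of $w$ changes.

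The main obstacle. The delicate point is the combinatorial lemma for $\eta_\s$: the change of $w$ by left/right multiplication by $s$ may move $w$ out of the normal form $xt^\mu y$, and one must repeatedly rewrite in normal form to read off $\eta_\s$. Handling this cleanly, and in particular ruling out the scenario where $\ell(\eta_\s(sws^{\s}))$ drops by more than permitted, is where the bulk of the technical work lies. In the general (not residually split) case there is the further complication that $\s$ acts nontrivially on $\BS$, so both left and right reductions must be coordinated with this twist, which is precisely the additional content carried out in \cite{He-CDM}.
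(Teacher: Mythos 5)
The paper does not actually prove this statement; it is imported wholesale from \cite[Corollary 10.4]{He14} (residually split case) and \cite[Theorem 2.30]{He-CDM} (general case). Your plan---induct on $\ell(w)$ using the Deligne--Lusztig reduction and anchor the induction at minimal length elements---is in fact the strategy underlying those references, though \cite{He14} routes through the ``dimension $=$ degree'' theorem and a degree bound on class polynomials rather than bounding $\dim X_w(b)$ directly. Your direct variant is legitimate, but as written it has two gaps.

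First, the displayed base-case formula is wrong as stated: $\tfrac12(\ell(\eta_\s(w'))+\ell(w')-2\ell(w'))$ added to $\ell(w')-\langle\nu_b,2\rho\rangle$ gives $\tfrac12(\ell(w')+\ell(\eta_\s(w')))-\langle\nu_b,2\rho\rangle$, which differs from $d_{w'}(b)$ by $\langle\nu_b,\rho\rangle-\tfrac12\mathrm{def}(b)$, and this is not zero in general. Theorem~\ref{2.2} gives simply $\dim X_{w'}(b)=\ell(w')-\langle\nu_b,2\rho\rangle$, and what you need is the identity you state at the end of that paragraph, $\ell(w')-\ell(\eta_\s(w'))=\langle\nu_b,2\rho\rangle-\mathrm{def}(b)$ for a minimal length element $w'$ with $\dot w'\in[b]$; this is genuinely a nontrivial fact about minimal length elements (it is equivalent to such $w'$ being cordial) that must be imported or proved, not merely ``checked.''

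Second, the inductive step hinges on the monotonicity claim $d_{sw}(b)+1\le d_w(b)$ and $d_{sws^\s}(b)+1\le d_w(b)$ whenever $\ell(sws^\s)=\ell(w)-2$. For $s\in\BS$ with $w=xt^\mu y$ in normal form and $\mu$ regular this is a pleasant Coxeter-group exercise: one sees that $\eta_\s(sws^\s)=\eta_\s(w)$ (so the second inequality is an equality), and the first inequality reduces to the standard lemma that $sx<x$ together with $us>u$ forces $usx<ux$ (take $u=\s^{-1}(y)$). But the real work---which you correctly flag but do not resolve---is for $s$ an affine simple reflection in $\tS\setminus\BS$ and for non-regular $\mu$: then left multiplication by $s$ changes the $W_0$-double coset $W_0t^\mu W_0$, the normal form must be recomputed from scratch, and it is not at all immediate how $\ell(\eta_\s(-))$ moves. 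This is precisely the content of the technical lemmas in \cite{He14} (around \S10), and until it is supplied the proof has a genuine gap. As a strategy sketch yours is sound and faithful; as a proof it is incomplete exactly where the references spend their effort.
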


For any $w \in W_0$, we denote by $\supp(w) \subset \BS$ the set of simple reflections appears in some (or equivalently, any) reduced expression of $w$. We set $\supp_\s(w)=\cup_{i \in \BZ} \s^i(\supp(w))$. 

For any $w \in \tW$, let $\l_w$ is the unique dominant coweight such that $w \in W_0 t^{\l_w} W_0$. For any $\l \in X_*(T)_{\G_0}$, we denote by $\l^\diamondsuit$ the average of the $\s$-orbit of $\l$. For any $\l, \l' \in X_*(T)^+_{\BQ}$, we write $\l \ge \l'$ if $\l-\l' \in \sum_{\a \in \D} \BQ_{\ge 0} \a^\vee$ and write $\l \ge _\BZ \l'$ if $\l-\l' \in \sum_{\a \in \D} \BN \a^\vee$. Here $\BN$ is the set of natural numbers, i.e., the set of nonnegative integers. 

A {\it critical strip} of the apartment $V$ by definition is the subset $\{v; -1<\<v, \a\><0\}$ for a given positive root in the reduced root system associated to the affine Weyl group $W_a$. We remove all the critical strips from $V$ and call each connected component of the remaining subset of $V$ {\it a Shrunken Weyl chamber}.

\section{Some combinatorial properties}

\subsection{Minimal length elements} For any $\s$-conjugacy class $\co$ in $\tW$, we denote by $\co_{\min}$ the set of minimal length elements in $\co$. For $w, w' \in \tW$ and $s \in \tS$, we write $w \xrightarrow{s}_\s w'$ if $w'=s w \s(s)$ and $\ell(w') \le \ell(w)$. We write $w \to_\s w'$ if there is a sequence $w=w_0, w_1, \cdots, w_n=w'$ of elements in $\tW$ such that for any $k$, $w_{k-1} \xrightarrow{s}_\s w_k$ for some $s \in \tS$. We write $w \approx_\s w'$ if $w \to_\s w'$ and $w' \to_\s w$. It is easy to see that $w \approx_\s w'$ if $w \to_\s w'$ and $\ell(w)=\ell(w')$.

The following result is proved in \cite[\S 2]{HN14}. 

\begin{theorem}\label{ux}
	Let $\co$ be a $\s$-conjugacy class of $\tW$ and $w \in \co$. Then there exists $w' \in \co_{\min}$ such that $w \to_\s w'$.
\end{theorem}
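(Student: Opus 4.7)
The plan is to prove this by induction on $\ell(w) - \ell_{\min}(\co)$, where $\ell_{\min}(\co) = \min_{x \in \co} \ell(x)$. The base case $\ell(w) = \ell_{\min}(\co)$ is vacuous, since $w$ itself is a witness. For the inductive step, it suffices to establish the following \emph{key claim}: if $w \in \co$ with $\ell(w) > \ell_{\min}(\co)$, then there exist $w_1 \in \co$ and $s \in \tS$ with $w \to_\s w_1$, $\ell(w_1) = \ell(w)$, and $\ell(s w_1 \s(s)) < \ell(w_1)$. Granting this, set $w_2 = s w_1 \s(s)$; then $w \to_\s w_2$, $\ell(w_2) < \ell(w)$, and the inductive hypothesis applied to $w_2$ produces the desired $w' \in \co_{\min}$ with $w_2 \to_\s w'$, hence $w \to_\s w'$ by concatenation.

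To set up the key claim, I would first reduce to working inside a coset of $W_a$: write $\tW = W_a \rtimes \Omega$, and note that $\s$-conjugation preserves the image in $\Omega_\s$, so after fixing a length-zero element $\tau \in \Omega$, we may regard $\co$ as a subset of $W_a \tau$ and work with the (twisted) Coxeter-theoretic length function on $W_a$. Next I would pass to \emph{$\approx_\s$-equivalence classes} inside $\co$: partition the length-$\ell(w)$ elements of $\co$ reachable from $w$ via length-preserving $\xrightarrow{s}_\s$ moves into equivalence classes, and in each such class look for a member admitting a length-decreasing $\s$-conjugation.

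The technical core, and the main obstacle, is showing that at least one $\approx_\s$-class of elements in $\co$ of length $>\ell_{\min}(\co)$ contains such a length-decreasing neighbor. I would follow the strategy of Geck--Pfeiffer for finite Coxeter groups, now adapted to the $\s$-twisted affine setting: use the normal form $w = x t^\mu y$ with $t^\mu y \in {}^\BS \tW$ to analyze how simple reflections act on $w$ by $\s$-conjugation, and exploit that length-preserving cyclic shifts by $s \in \tS$ permute the inversion set in a controlled way. Concretely, if no length-preserving cycle produces a descent, one shows via a root-system argument that $w$ itself must already satisfy $\ell(s w \s(s)) \leq \ell(w)$ for a suitable chain of simple reflections whose composition strictly decreases length, contradicting the hypothesis that $w \notin \co_{\min}$.

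The hard part is organizing this cyclic-shift analysis uniformly across the extended affine Weyl group, where $\tW$ is not itself a Coxeter group and the $\s$-action mixes the translation and finite parts; this is handled by separating the argument into a ``finite-type'' cyclic shift on the $W_0$-factor and a translation-type adjustment inside $X_*(T)_{\G_0}$, and then invoking the semidirect product length formula to combine them. Once the key claim is proved, the induction closes and yields the theorem.
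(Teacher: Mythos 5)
Your inductive reduction is correct and standard: it suffices to prove the \emph{key claim} that any $w\in\co$ with $\ell(w)>\ell_{\min}(\co)$ is $\to_\s$-connected, within its own length level, to some $w_1$ admitting a strictly length-decreasing $\s$-conjugation by a simple reflection. But this key claim is precisely the entire content of the theorem, and your proposal never actually establishes it. The passage ``one shows via a root-system argument that $w$ itself must already satisfy $\ell(s w\s(s))\le\ell(w)$ for a suitable chain of simple reflections whose composition strictly decreases length'' is a restatement of what has to be proved, not a proof; and ``separating the argument into a finite-type cyclic shift on the $W_0$-factor and a translation-type adjustment inside $X_*(T)_{\G_0}$'' is a heuristic, not a carried-out argument. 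In fact, the naive plan of separating the $W_0$-part from the translation part does not work directly: $\s$-conjugation by a simple reflection does not respect the semidirect decomposition $\tW=X_*(T)_{\G_0}\rtimes W_0$ in any length-compatible way.

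For the record, this theorem is the $\s$-twisted extended affine analogue of the Geck--Pfeiffer theorem, and the paper does not reprove it but cites He--Nie [HN14]. The proof there is genuinely geometric rather than a direct cyclic-shift bookkeeping: one attaches to $w$ the Newton point $\nu_w$ and the affine subspace $V_w\subset V$ fixed (as a set) by the affine action of $w\s$, and then shows, by analyzing the position of the base alcove $\fka$ relative to $V_w$ and the walls it crosses, that if $\ell(w)$ is not minimal one can $\s$-conjugate by a simple reflection to either drop the length or strictly decrease a distance invariant to $V_w$ while preserving length; the induction closes because that distance invariant is bounded below. Your proposal is missing this entire mechanism --- the choice of invariant that decreases under length-preserving moves, and the alcove-geometric lemma guaranteeing a good wall to reflect in. Without it, the ``key claim'' is unsupported and the argument does not constitute a proof.
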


\begin{theorem}\label{2.2}
	Let $b \in \breve G$ and $w \in \co_{\min}$ for some $\s$-conjugacy class $\co$ of $\tW$. Then $X_w(b) \neq \emptyset$ if and only if $\dot w \in [b]$. In this case, $\dim X_w(b)=\ell(w)-\<\nu_b, 2 \rho\>$. 
\end{theorem}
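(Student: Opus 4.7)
The plan is to reduce to the case of $\s$-straight elements and handle that case directly. Recall that a $\s$-conjugacy class $\co$ of $\tW$ contains $\s$-straight elements, meaning elements $w_0$ with $\ell(w_0^n) = n \ell(w_0)$ for all $n \ge 1$, equivalently $\ell(w_0) = \<\nu_{w_0}, 2\rho\>$; any such $w_0$ is automatically of minimal length in $\co$. By Theorem~\ref{ux} together with the fact that any two minimal length elements of $\co$ are connected by a chain of length-preserving cyclic shifts $w \xrightarrow{s}_\s w'$ (a standard refinement in the minimal-length theory), and the Deligne--Lusztig reduction lemma, which supplies a universal homeomorphism $X_w(b) \cong X_{sw\s(s)}(b)$ whenever $\ell(sw\s(s)) = \ell(w)$, both the nonemptiness pattern and the dimension of $X_w(b)$ are constant along $\co_{\min}$. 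It therefore suffices to establish the theorem for one straight representative $w_0$.

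For such a $w_0$, the crucial structural input is that the Iwahori double coset $\breve I \dot w_0 \breve I$ is contained in a single $\s$-conjugacy class of $\breve G$, namely $[\dot w_0]$. One would prove this by exploiting the fundamental-alcove description of straight elements: writing any element as $i \dot w_0 i'$ with $i, i' \in \breve I$, one uses the Iwahori factorization together with straightness, which prevents any length drop under iterated $\s$-conjugation by $\breve I$, to $\s$-conjugate back to $\dot w_0$ via a suitable element of $\breve I$. This immediately yields $X_{w_0}(b) \ne \emptyset$ iff $\dot w_0 \in [b]$. Assuming then $[b] = [\dot w_0]$, one may replace $b$ by $\dot w_0$, and a direct analysis realizes $X_{w_0}(\dot w_0)$ as a discrete homogeneous space under the $\s$-centralizer $\BJ_b(F)$, giving $\dim X_{w_0}(\dot w_0) = 0$. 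Since straightness yields $\ell(w_0) = \<\nu_{w_0}, 2\rho\> = \<\nu_b, 2\rho\>$, this equals the claimed value $\ell(w_0) - \<\nu_b, 2\rho\>$. Transferring back along the chain of length-preserving shifts from $w_0$ to an arbitrary $w \in \co_{\min}$ then yields the theorem in full generality.

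The main obstacle is the structural claim $\breve I \dot w_0 \breve I \subset [\dot w_0]$ for $\s$-straight $w_0$. The delicate point is that an arbitrary element $i \dot w_0 i'$ must be $\s$-conjugated into $\dot w_0$ by an Iwahori element, and carrying this out requires a careful analysis of how iterated $\s$-twisted conjugation interacts with the affine root subgroups making up $\breve I$; straightness is precisely what rules out the nontrivial length drops that would otherwise break the iteration and prevent convergence to $\dot w_0$. Once this is secured, the dimension computation follows by bookkeeping, and the passage from straight elements to arbitrary $w \in \co_{\min}$ is essentially mechanical via Deligne--Lusztig reduction.
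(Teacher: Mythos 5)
Your proposed reduction fails at its first step: it is simply not true that every $\s$-conjugacy class $\co$ of $\tW$ contains a $\s$-straight element. By the He--Nie theory, the $\s$-conjugacy classes of $\tW$ containing a $\s$-straight element (the \emph{straight} ones) are exactly those in canonical bijection with $B(\BG)$, and the map from $\s$-conjugacy classes of $\tW$ to $B(\BG)$ is very far from injective. For a concrete counterexample, take $\BG$ split and let $w=s$ be a simple reflection of $W_0 \subset \tW$. Its $\tW$-conjugacy class $\co$ has $\co_{\min}$ consisting of reflections of length $1$, while the Newton point of any element of $\co$ is $0$; hence $\ell(w)=1>0=\<\nu_{\dot w},2\rho\>$ and no element of $\co$ is $\s$-straight. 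In particular the dimension formula of Theorem~\ref{2.2} gives $\dim X_s(\dot s)=\ell(s)-\<\nu_{\dot s},2\rho\>=1>0$, so $X_s(\dot s)$ is \emph{not} discrete. Your argument realizes $X_{w_0}(\dot w_0)$ as a discrete $\BJ_b(F)$-homogeneous space, which can only be correct for straight $w_0$ and cannot produce the positive dimensions that the theorem requires in general.

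The actual proof (from \cite{He14}, using \cite{HN14}) handles a general $w\in\co_{\min}$ through a finer decomposition: one shows that $w$ can be chosen (after $\approx_\s$) of the form $w=ux$, where $x$ is $\s$-straight, $J\subset\tS$ is a $\s$-stable spherical subset with $\mathrm{Ad}(\dot x)\circ\s$ stabilizing $J$, and $u\in W_J$ is a minimal length element of its $\mathrm{Ad}(x)\circ\s$-twisted conjugacy class in the finite group $W_J$. For the straight part $x$, one uses the structural input you identify (the double coset $\breve I\dot x\breve I$ lies in a single $\s$-conjugacy class), while the finite part $u$ is handled via classical Deligne--Lusztig theory over the reductive quotient of the parahoric attached to $J$, producing the contribution $\ell(u)$ to the dimension. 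Combining these gives $\dim X_w(\dot w)=\ell(u)=\ell(w)-\ell(x)=\ell(w)-\<\nu_{\dot w},2\rho\>$, which recovers the theorem also in the non-straight case. Your outline has essentially collapsed this two-layer argument to the case $u=1$. Secondarily, be careful with the claim that any two elements of $\co_{\min}$ are connected by length-preserving cyclic shifts; the precise statement in \cite{HN14} involves conjugation by length-zero elements and is not literally the $\approx_\s$-connectedness of $\co_{\min}$, although for the purposes of invariance of $X_w(b)$ this distinction can be absorbed.
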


\subsection{Deligne-Lusztig reduction}
Now we recall the ``reduction'' \`a la Deligne and Lusztig for affine Deligne-Lusztig varieties (see \cite[Proof of Theorem 1.6]{DL}, also \cite[Corollary 2.5.3]{GH}).

\begin{proposition}\label{DLReduction}
Let $b \in \breve G$. Then 
\begin{enumerate}
\item 
Let $w, w' \in \tW$ with $w \approx_\s w'$, then $$\dim X_w(b)=\dim X_{w'}(b).$$ 
\item
Let $w \in \tW$ and $s \in \tS$ with $\ell(s w \s(s))=\ell(w)-2$. Then $$\dim X_w(b)=\max\{\dim X_{s w}(b), \dim X_{s w \s(s)}(b)\}+1.$$ \end{enumerate}
\end{proposition}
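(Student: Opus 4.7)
The plan is to reduce both statements to a single elementary step: given $w \in \tW$ and $s \in \tS$, compare $X_w(b)$ with $X_{sw\s(s)}(b)$, and in the second case also with $X_{sw}(b)$. Part (1) then follows by iterating along a chain $w = w_0 \xrightarrow{s_1}_\s w_1 \xrightarrow{s_2}_\s \cdots \xrightarrow{s_n}_\s w_n = w'$ witnessing $w \approx_\s w'$, since each arrow has $\ell(w_{k-1}) = \ell(w_k)$.

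The central geometric input is the projection $\pi_s : Fl \to \breve G/\breve P_s$, where $\breve P_s = \breve I \sqcup \breve I \dot s \breve I$ is the parahoric containing $\breve I$; each fiber is a projective line $\mathbb P^1$, with two $\breve I$-orbits $\{g\breve I\}$ and $g\dot s \breve I/(\breve I \cap \dot s \breve I \dot s)$. For any $g\breve I \in Fl$ and $g' = gh$ with $h \in \breve P_s$, one has $g'^{-1} b \s(g') = h^{-1}(g^{-1} b \s(g)) \s(h)$, so the image under $x\breve I \mapsto x^{-1} b \s(x) \breve I$ of the $\mathbb P^1$ through $g\breve I$ is contained in $\breve P_s \cdot (\breve I \dot w \breve I) \cdot \breve P_{\s(s)}$. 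By Iwahori--Matsumoto, this is a union of at most four Iwahori double cosets, indexed by $\{w,\, sw,\, w\s(s),\, sw\s(s)\}$; which of them actually appear, and with what fiber structure, is determined by the length relations among these four elements.

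For part (1), $\ell(sw\s(s)) = \ell(w)$ forces either $\ell(sw) = \ell(w)+1$ with $\ell(sw\s(s)) = \ell(sw)-1$, or $\ell(sw) = \ell(w)-1$ with $\ell(sw\s(s)) = \ell(sw)+1$. In either sub-case the four-term union collapses to $\breve I \dot w \breve I \cup \breve I \dot{sw\s(s)} \breve I$, and a direct analysis of each $\mathbb P^1$-fiber using the Bruhat decomposition inside $\breve P_s$ exhibits an isomorphism $X_w(b) \isoarrow X_{sw\s(s)}(b)$ (constructed on the level of $\breve I$-orbits inside each fiber, after an appropriate choice of representative), which gives the equality of dimensions.

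For part (2), $\ell(sw\s(s)) = \ell(w) - 2$ forces $\ell(sw) = \ell(w\s(s)) = \ell(w) - 1$. Now the four-term union contains all three shorter cosets $\breve I \dot{sw} \breve I$, $\breve I \dot{w\s(s)} \breve I$, $\breve I \dot{sw\s(s)} \breve I$, and the $\pi_s$-fibers cut $X_w(b)$ into two locally closed pieces: one is an $\mathbb A^1$-bundle over a locally closed subvariety of $X_{sw\s(s)}(b)$, the other is a $\mathbb G_m$-bundle over a locally closed subvariety of $X_{sw}(b)$. Since each fiber has dimension $1$, this yields $\dim X_w(b) = \max\{\dim X_{sw}(b),\, \dim X_{sw\s(s)}(b)\} + 1$, with the convention $\dim \emptyset = -\infty$ handling the case where one of the two target ADLVs is empty. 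The main technical obstacle throughout is the bookkeeping of the $\breve P_s$-orbit structure under the $\s$-twist, which breaks the symmetry between left-multiplication by $\dot s$ and right-multiplication by $\dot{\s(s)}$ and forces a separate verification in each length sub-case; this is exactly the content of the classical Deligne--Lusztig reduction argument specialized to the $\sigma$-twisted conjugation setting.
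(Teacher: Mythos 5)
The paper does not give its own proof of this proposition; it cites \cite[Proof of Theorem 1.6]{DL} and \cite[Corollary 2.5.3]{GH}, so your task is really being measured against the standard Deligne--Lusztig reduction argument. Your overall strategy --- projecting along $\pi_s\colon Fl \to \breve G/\breve P_s$ and analyzing how the $\mathbb P^1$-fibers meet the various Iwahori strata --- is indeed the standard one, and the statement you aim for is correct.

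However, there is a genuine error in the central combinatorial claim for part (1). You assert that when $\ell(sw\s(s))=\ell(w)$, the four-term union $\breve P_s(\breve I\dot w\breve I)\breve P_{\s(s)}$ ``collapses to $\breve I\dot w\breve I\cup\breve I\dot{sw\s(s)}\breve I$.'' This is false in general. Take, say, $W=S_3$ with $\s=\mathrm{id}$, $s=s_1$, $w=s_2s_1$; then $sw=s_1s_2s_1$, $ws=s_2$, $sws=s_1s_2$ have lengths $3,1,2$ respectively, and one computes directly that
$\breve P_s\,\breve I\dot w\breve I\,\breve P_s \;=\; \breve I\dot w\breve I\cup\breve I\dot{sw}\breve I\cup\breve I\dot{ws}\breve I\cup\breve I\dot{sws}\breve I$,
i.e.\ all four double cosets appear. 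The same happens in both of your sub-cases, and also under the part (2) hypothesis $\ell(sw\s(s))=\ell(w)-2$. The correct analysis does not discard two of the cosets; rather, for each $g\breve I\in X_w(b)$ one must track which locus inside the $\mathbb P^1$-fiber lands in which of the four cosets, and then build the desired bijection (resp.\ two-piece decomposition) from that finer information. As written, the ``direct analysis of each $\mathbb P^1$-fiber'' step is exactly where the real work lies, and the collapsing claim makes it look easier than it is --- the isomorphism in (1) matches the one point of the fiber lying over $w$ with one other point lying over $sw\s(s)$, while the rest of the fiber genuinely lands in the remaining two cosets. Once that is repaired, your part (1) argument becomes the familiar one, and your part (2) decomposition into an $\mathbb A^1$-bundle over $X_{sw\s(s)}(b)$ and a $\mathbb G_m$-bundle over $X_{sw}(b)$ is correct (and in fact both maps are surjective onto the respective ADLVs, so the hedge ``over a locally closed subvariety of'' is unnecessary).
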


Here by convention, we set $\dim \emptyset=-\infty$ and $-\infty+n=-\infty$ for any $n \in \BR$. 

\subsection{The relation $\Rightarrow$} Following \cite[Definition 3.1.4]{GH}, for $w, w' \in \tW$, we write $w \Rightarrow_\s w'$ if for any $b \in \breve G$, $$\dim X_w(b)-d_w(b) \ge \dim X_{w'}(b)-d_{w'}(b).$$ Here again by convention, we set $\dim \emptyset=-\infty$. If the right hand side is $-\infty$, then the inequality holds regardless of the left hand side. It is also easy to see that the relation is transitive. 

Note that by definition of virtual dimension, $w \Rightarrow_\s w'$ if and only if for any $b \in \breve G$ with $X_{w'}(b) \neq \emptyset$, $X_w(b) \neq \emptyset$, and in this case, $$\dim X_w(b)-\dim X_{w'}(b) \ge \frac{1}{2}\bigl( \ell(w) + \ell(\eta_\s(w)) -\ell(w')-\ell(\eta_\s(w'))\bigr).$$

We write $w \Leftrightarrow_\s w'$ if $w \Rightarrow_\s w'$ and $w' \Rightarrow_\s w$.

\subsection{The monoid structure on $\tW$}
By \cite{He0}, for any $w, w' \in \tW$,  the subset $\{u w'; u \le w\}$ of $\tW$ contains a unique maximal element which we denote by $w \ast w'$. Moreover, $w \ast w'=\max\{u v; u \le w, v \le w'\}$. Hence $\ast$ is associative. This gives a monoid structure on $\tW$. If $w_1 \le w$ and $w'_1 \le w'$, then $w_1 \ast w'_1 \le w \ast w'$. 

\section{The cordial elements}\label{3}

\subsection{Definition}
There is a natural partial ordering $\le$ on $B(\mathbb G)$ defined as follows. Let $[b], [b'] \in B(\mathbb G)$. Then $[b] \le [b']$ if $\k(b)=\k(b')$ and $\nu_b \le \nu_{b'}$. 

Now we recall the cordial elements introduced by Mili\'cevi\'c and Viehmann in \cite{MV}. 

For any $w \in \tW$, there is a unique maximal $\s$-conjugacy class $[b]$ such that $X_w(b) \neq \emptyset$. We denote this $\s$-conjugacy class by $[b_w]$. The element $w$ is called \emph{cordial} if $\dim X_w(b_w)=d_w(b_w)$. Equivalently, $w$ is cordial if and only if $\ell(w)-\ell(\eta_\s(w))=\langle\nu_{b_w}, 2 \rho\rangle-\text{def}(b_w)$. 

By definition, if $w \Leftrightarrow_\s w'$, then $w$ is a cordial element if and only if $w'$ is a cordial element. The following result is proved in \cite[Theorem 1.1 \& Corollary 2.18]{MV}\footnote{In fact, in \cite{MV} only the unramified groups are considered. However, as shown in \cite[Section 6]{He14} the nonemptiness pattern and dimension formula of affine Deligne-Lusztig varieties for any quasi-split group are the same as those for a suitable unramified group.}. 

\begin{theorem}\label{cordial}
Let $w \in \tW$ be a cordial element. Then 

(1) Let $[b], [b'] \in B(\mathbb G)$. If $[b] \le [b'] \le [b_w]$ and $X_w(b) \neq \emptyset$, then $X_w(b') \neq \emptyset$. 

(2) If $X_w(b) \neq \emptyset$, then $\dim X_w(b)=d_w(b)$. 
\end{theorem}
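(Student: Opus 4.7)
The argument rests on two inputs. One is Theorem~\ref{dim-vir}: the universal upper bound $\dim X_w(b) \le d_w(b)$. The other is a Grothendieck-style semicontinuity statement for the Newton map on the Iwahori double coset
\[
\phi_w : \breve I \dot w \breve I \longrightarrow B(\mathbb G), \qquad g \mapsto [g]_\s,
\]
whose generic value is $[b_w]$ by the definition of $b_w$, and whose fibers over $[b]$ are related to $X_w(b)$ by a $\mathbb J_b$-equivariant correspondence given by $g = h\i b \s(h)$. The cordiality hypothesis is the equality $\dim X_w(b_w) = d_w(b_w)$ at the generic value, equivalently the length identity $\ell(w)-\ell(\eta_\s(w)) = \<\nu_{b_w}, 2\rho\> - \mathrm{def}(b_w)$, which is the combinatorial fact I would exploit throughout.

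To prove (2), take $[b] \le [b_w]$ with $X_w(b) \ne \emptyset$. Combining upper semicontinuity of $\phi_w$ with the $\mathbb J_b$-torsor-type description of $X_w(b)$ over $\breve I \dot w \breve I \cap [b]$ gives a lower bound of the shape
\[
\dim X_w(b) - \dim X_w(b_w) \;\ge\; \<\nu_{b_w} - \nu_b, \rho\> + \tfrac{1}{2}(\mathrm{def}(b_w) - \mathrm{def}(b)) \;=\; d_w(b) - d_w(b_w).
\]
The right-hand side is precisely $d_w(b) - d_w(b_w)$ by direct expansion of the virtual-dimension formula. Cordiality $\dim X_w(b_w) = d_w(b_w)$ then upgrades this to $\dim X_w(b) \ge d_w(b)$, which combined with Theorem~\ref{dim-vir} yields the desired equality.

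For (1), take $[b] \le [b'] \le [b_w]$ with $X_w(b) \ne \emptyset$. Since $X_w(b_w) \ne \emptyset$ by definition of $b_w$, both ``extremal'' Newton strata of $\phi_w$ are nonempty. By the same semicontinuity, the preimage $\phi_w\i([b'])$ lies in a chain of Newton strata inside $\breve I \dot w \breve I$ whose closure relations are compatible with the order on $B(\mathbb G)$. The dimension bound of the previous paragraph forces each nonempty stratum in this chain to have its ``expected'' cordial dimension $d_w(\cdot)$, and these expected dimensions vary strictly and without gaps along a saturated $\le$-chain from $[b]$ up to $[b_w]$. Hence no intermediate $[b']$ can drop out, i.e.\ $X_w(b') \ne \emptyset$.

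\textbf{Main obstacle.} The technical heart is the Grothendieck-style lower bound on $\dim X_w(b) - \dim X_w(b_w)$; extracting it requires a careful dimension analysis of Newton strata inside an Iwahori double coset together with control of the $\mathbb J_b$-action, and is the main work carried out in \cite{MV}. Everything after that bound is formal: one combines it with cordiality at $[b_w]$ and the upper bound of Theorem~\ref{dim-vir} to obtain both the dimension formula and the saturation of the nonemptiness pattern.
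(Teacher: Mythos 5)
The paper does not prove Theorem~\ref{cordial} at all: it simply cites \cite[Theorem 1.1 \& Corollary 2.18]{MV}, together with a footnote reducing the quasi-split case to the unramified case via \cite[Section 6]{He14}. So there is no internal proof to compare your sketch against, and your attempt should be judged on its own.

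Given the key input you isolate (the lower bound $\dim X_w(b)-\dim X_w(b_w)\ge d_w(b)-d_w(b_w)$, which you correctly attribute to \cite{MV}), your deduction of part (2) is sound: combining that bound with cordiality $\dim X_w(b_w)=d_w(b_w)$ gives $\dim X_w(b)\ge d_w(b)$, and Theorem~\ref{dim-vir} closes the gap. The arithmetic identity $d_w(b)-d_w(b_w)=\<\nu_{b_w}-\nu_b,\rho\>+\tfrac12(\mathrm{def}(b_w)-\mathrm{def}(b))$ is also correct.

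Part (1), however, does not follow from what you have written. The step ``these expected dimensions vary strictly and without gaps along a saturated $\le$-chain from $[b]$ to $[b_w]$, hence no intermediate $[b']$ can drop out'' is a non-sequitur. The dimension bound only constrains \emph{nonempty} strata: it says nothing about whether a stratum $S_{[b']}=[b']\cap\breve I\dot w\breve I$ is nonempty in the first place, and upper semicontinuity of the Newton point only gives $\overline{S_{[b']}}\subseteq\bigcup_{[b'']\le[b']}S_{[b'']}$, not the reverse. Knowing that $S_{[b]}$ and $S_{[b_w]}$ are both nonempty and have the expected dimensions does not by itself exclude a hole at some intermediate $[b']$. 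The missing ingredient is a genuine purity statement for the Newton stratification of $\breve I\dot w\breve I$ (of the type in \cite{Vie-Ann}: the boundary of a Newton stratum in its closure is empty or pure of codimension one), applied inductively along the ranked poset $B(\mathbb G)$ and combined with the dimension formula of part (2). Even with purity, some care is needed: a naive induction produces only one saturated chain of nonempty strata, not nonemptiness of \emph{every} $[b']$ in the interval $[[b],[b_w]]$, and the full argument in \cite{MV} addresses this. You should either invoke the relevant purity result explicitly and carry out the induction, or (as the paper does) simply cite the statement from \cite{MV}.
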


It is mentioned in loc. cit that to fully characterize the cordial elements is fairly difficult. In \cite[Theorem1.2]{MV}, some interesting families of cordial elements are provided. The main result of this section is to provide another family of cordial elements. 

\begin{theorem}\label{cordial-1}
Let $\l$ be a dominant coweight and $x \in W_0$. Then $x t^\l$ is a cordial element and $[b_{x t^\l}]=[\dot t^\l]$. 
\end{theorem}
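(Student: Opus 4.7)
The plan is to reduce the cordiality assertion to two computations: a length identity, and the identification $[b_{xt^\lambda}] = [\dot t^\lambda]$ together with $\text{def}(\dot t^\lambda) = 0$.

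First I would verify that $t^\lambda \in {}^{\BS}\tW$ whenever $\lambda$ is dominant: the Iwahori-Matsumoto length formula gives $\ell(s t^\lambda) = \ell(t^\lambda) + 1$ for every $s \in \BS$, so the canonical factorization of $xt^\lambda$ appearing in the definition of $\eta_\sigma$ is simply $xt^\lambda = x \cdot t^\lambda \cdot e$, and hence $\eta_\sigma(xt^\lambda) = x$. A parallel calculation yields $\ell(xt^\lambda) = \ell(x) + \langle \lambda, 2\rho\rangle$, so $\ell(w) - \ell(\eta_\sigma(w)) = \langle \lambda, 2\rho\rangle$. On the other side, the Newton point of $\dot t^\lambda$ is $\lambda^\diamondsuit$, and since $2\rho$ is $\sigma$-invariant we have $\langle \lambda^\diamondsuit, 2\rho\rangle = \langle \lambda, 2\rho\rangle$. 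Furthermore $T \subset \mathbb{J}_{\dot t^\lambda}$, whence $\text{def}(\dot t^\lambda) = 0$. So once we know $[b_{xt^\lambda}] = [\dot t^\lambda]$, the equivalent length form of cordiality reads $\langle \lambda, 2\rho\rangle = \langle \lambda, 2\rho\rangle - 0$, which holds.

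It remains to verify $[b_{xt^\lambda}] = [\dot t^\lambda]$. The upper bound $[b_{xt^\lambda}] \leq [\dot t^\lambda]$ comes from Mazur's inequality in the affine flag variety: any $[b']$ with $X_w(b') \neq \emptyset$ satisfies $\nu_{b'} \leq \lambda_w = \lambda$; if $[b'] > [\dot t^\lambda]$, then also $\nu_{b'} > \lambda^\diamondsuit$ strictly, yielding $\langle \lambda, 2\rho\rangle \geq \langle \nu_{b'}, 2\rho\rangle > \langle \lambda^\diamondsuit, 2\rho\rangle = \langle \lambda, 2\rho\rangle$, a contradiction. For the matching lower bound, namely $X_{xt^\lambda}(\dot t^\lambda) \neq \emptyset$, I would argue by induction on $\ell(x)$. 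The base case $x = e$ follows from Theorem \ref{2.2}, since $t^\lambda$ is a minimum length element of its $\sigma$-conjugacy class by the length identity. For the inductive step with $\ell(x) \geq 1$, pick $s \in \BS$ with $sx < x$ and consider the Deligne-Lusztig reduction of $w = xt^\lambda$ by $s$. Writing $s w \sigma(s) = (sx\sigma(s)) \cdot t^{\sigma(s)\lambda}$, the central technical task is the length check $\ell(s w \sigma(s)) = \ell(w) - 2$; once that holds, Proposition \ref{DLReduction}(2) yields $\dim X_w(\dot t^\lambda) \geq \dim X_{sw}(\dot t^\lambda) + 1$, and the inductive hypothesis applied to $(sx)t^\lambda$ (which has strictly shorter Weyl part) gives both nonemptiness and $\dim X_w(\dot t^\lambda) \geq \ell(x)$, matching the virtual dimension $d_w(\dot t^\lambda) = \ell(x)$ from Theorem \ref{dim-vir}.

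The main obstacle will be precisely this length verification $\ell(s w \sigma(s)) = \ell(w) - 2$, in particular when $\sigma(s) \notin J_\lambda := \{r \in \BS : r\lambda = \lambda\}$, so that $\sigma(s)\lambda$ is non-dominant and one must track the sign of $\langle \lambda, \alpha_{\sigma(s)}\rangle$ through the Iwahori-Matsumoto formula. I expect this can be handled by first choosing $s$ so that $\sigma(s) \in J_\lambda$ and $\sigma(s)$ is a right descent of $sx$ whenever possible, and otherwise by preparatory length-preserving $\approx_\sigma$-moves to replace $x$ by a more convenient representative of its $\sigma$-conjugacy class in $W_0$ (for instance, a minimum length element), or by passing to an affine simple reflection in $\tS \setminus \BS$ when no finite $s$ suffices.
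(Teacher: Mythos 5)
Your length bookkeeping and defect computation are correct, and the upper bound $[b_{xt^\lambda}] \leq [\dot t^\lambda]$ is essentially right (Mazur's inequality for the affine Grassmannian, pulled back via $\breve G/\breve I \to \breve G/\breve K$, directly gives $\nu_{b_{xt^\lambda}} \leq \lambda^\diamondsuit$, with $\kappa$ matching automatically; your contradiction detour is unnecessary but harmless). The genuine gap is in the lower bound $X_{xt^\lambda}(\dot t^\lambda) \neq \emptyset$. Your induction requires that whenever $\ell(x) \geq 1$, there exists $s \in \BS$ with $sx < x$ and $\ell\bigl(s\,xt^\lambda\,\sigma(s)\bigr) = \ell(xt^\lambda) - 2$. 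This simply fails in general: already for $\lambda = 0$ and $x$ a Coxeter element of $W_0$ (e.g.\ $x = s_1 s_2$ in type $A_2$, $\sigma = \id$), the only admissible $s$ gives $\ell(sxs) = \ell(x)$, not $\ell(x) - 2$, so the reduction step is unavailable. You flag this yourself as the ``main obstacle,'' but the remedies you sketch do not close the gap. Conjugating $xt^\lambda$ by $u \in W_0$ sends it to $ux\sigma(u)^{-1}\, t^{\sigma(u)\lambda}$, and $\sigma(u)\lambda$ ceases to be dominant as soon as $\sigma(u) \notin W_{J_\lambda}$; so ``preparatory $\approx_\sigma$-moves to replace $x$ by a minimal length representative'' takes you out of the family $\{x' t^\lambda\}$, and if you leave that family and reduce all the way to a minimal length element $w_{\min}$ of the $\sigma$-conjugacy class via Theorem~\ref{ux}, you then need the separate (and nonobvious) identification $[\dot w_{\min}] = [\dot t^\lambda]$ before Theorem~\ref{2.2} can help. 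Passing to affine reflections has the same problem. In short, you have correctly located the hard combinatorics but not done them.

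This is exactly the ``technical'' route that the paper deliberately sidesteps. The proof in the paper gets the lower bound with no induction at all: since $t^\lambda \leq x t^\lambda \leq w_\BS t^\lambda$ in Bruhat order, one has $\breve I \dot t^\lambda \breve I \subset \overline{\breve I\, \dot x \dot t^\lambda\, \breve I} \subset \overline{\breve I\, \dot w_\BS \dot t^\lambda\, \breve I}$; by \cite[Corollary 5.6]{Vie-Ann}, $[b_w]$ is the unique maximal class meeting the \emph{closure} $\overline{\breve I \dot w \breve I}$, so $[b_{t^\lambda}] \leq [b_{xt^\lambda}] \leq [b_{w_\BS t^\lambda}]$. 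The two ends are pinned down to $[\dot t^\lambda]$ by Theorem~\ref{2.2} and by the sandwich $\breve I \dot w_\BS \dot t^\lambda \breve I \subset \breve K \dot t^\lambda \breve K \subset \overline{\breve I\dot w_\BS\dot t^\lambda\breve I}$ together with Theorem~\ref{mazur}. That argument replaces the case-by-case length analysis you would need with a single monotonicity statement about closures, which is why it succeeds where the inductive approach stalls.
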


\begin{remark}
	The original proof I had is a bit technical. The following proof is suggested to me by E. Viehmann. 
\end{remark}

\subsection{Mazur's inequality} Recall that $\BG$ is quasi-split over $F$. Let $\breve K \supset \breve I$ be a $\s$-stable  special maximal parahoric subgroup of $\breve G$. The nonemptiness pattern of the affine Deligne-Lusztig varieties in the affine Grassmannian $\breve G/\breve K$ is determined in terms of Mazur's inequality. It was established by Gashi \cite{Ga} for unramified groups and proved in general case in \cite[Theorem 7.1]{He14}. We may reformulate the result as follows. 

\begin{theorem}\label{mazur}
	Let $\l$ be a dominant coweight and $b \in \breve G$. Then $[b] \cap \breve K \dot t^\l \breve K \neq \emptyset$ if and only if $\k(b)=\k(t^\l)$ and $\nu_b \le \l^\diamondsuit$. 
\end{theorem}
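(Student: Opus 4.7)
The plan is to prove the two directions of the equivalence separately. The forward (necessity) direction is the classical Mazur inequality, while the converse is the Kottwitz--Rapoport nonemptiness criterion, established by Gashi in the unramified setting.

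For \emph{necessity}, suppose $b = k_1 \dot t^\l k_2$ with $k_1, k_2 \in \breve K$. The Kottwitz map $\k \colon \breve G \to \Omega_\s$ is $\s$-conjugation invariant and vanishes on any special maximal parahoric, hence $\k(b) = \k(k_1 \dot t^\l k_2) = \k(t^\l)$. For the Newton point inequality, I would invoke the classical Mazur inequality in its standard form: the Newton polygon of any element of $\breve K \dot t^\l \breve K$ lies below the Hodge polygon of $\l$, and after averaging over the $\s$-orbit of $\l$ (which is forced by the requirement that $\nu_b$ be $\sigma$-invariant and dominant) this translates into $\nu_b \le \l^\diamondsuit$ in $X_*(T)_{\G_0,\BQ}^+$.

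For \emph{sufficiency}, the plan is an induction on the semisimple rank of $\BG$ via Hodge--Newton decomposition. Suppose $\k(b) = \k(t^\l)$ and $\nu_b \le \l^\diamondsuit$. If $\nu_b$ is not central in $\BG$, choose a proper standard $\s$-stable Levi subgroup $M \subsetneq \BG$ such that $\nu_b$ becomes central in $M$. The assumption $\nu_b \le \l^\diamondsuit$, combined with the Hodge--Newton decomposition for the affine Grassmannian, reduces the nonemptiness claim for $(\BG, \l, [b])$ to the analogous claim for $(M, \l_M, [b]_M)$, where $\l_M$ is an $M$-dominant coweight conjugate to $\l$ with image in the same coset of the coroot lattice. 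Iterating, one reduces to the basic case in which $\nu_b$ is central in $\BG$; here $[b]$ is determined by $\k(b)$ alone, and one must construct a single explicit representative of $[b]$ inside $\breve K \dot t^\l \breve K$, which for split groups can be achieved by modifying $\dot t^\l$ by a suitable central element to adjust the Newton point. To handle the general quasi-split case, I would invoke the reduction of \cite[Section 6]{He14}, which identifies the nonemptiness pattern with that of a suitably chosen unramified group having the same Iwahori--Weyl group and $\s$-action, so that Gashi's theorem applies.

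The main obstacle will be verifying the compatibility of the Hodge--Newton reduction step with the $\s$-averaging operation defining $\l^\diamondsuit$: when restricting from $\BG$ to $M$ one must confirm that the coweight $\l_M$ produced by the decomposition still satisfies $\nu_b \le \l_M^\diamondsuit$ in the $M$-dominance order, where the averaging is now with respect to the (possibly different) induced $\s$-action on $X_*(T)_{\G_0}$ viewed through $M$. Tracking this carefully, and ensuring that the Kottwitz invariants also match at each stage of the induction, is the technical heart of the argument and is what \cite[Theorem 7.1]{He14} carries out in detail.
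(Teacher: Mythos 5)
The paper gives no proof of Theorem~\ref{mazur}: it is stated as a reformulation of Gashi~\cite{Ga} (unramified groups) and \cite[Theorem 7.1]{He14} (general quasi-split case), both cited without further argument, so there is no in-paper proof to compare against. Your sketch correctly identifies these two sources and separates the two directions. The necessity half is fine: the Kottwitz map is trivial on parahoric subgroups (hence on $\breve K$), and the group-theoretic Mazur inequality (Rapoport--Richartz) gives $\nu_b \le \l^\diamondsuit$.

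The sufficiency half, however, has a concrete error at the base case of the intended Hodge--Newton induction. You claim that when $[b]$ is basic one can produce a representative of $[b]$ inside $\breve K \dot t^\l \breve K$ ``by modifying $\dot t^\l$ by a suitable central element to adjust the Newton point.'' This cannot work: if the central element lies in $\breve K$ it changes neither the double coset nor the Newton point, and if it does not then multiplication moves you to a different $\breve K$-double coset entirely (the Kottwitz invariant is constant on $\breve K$-double cosets). Producing a basic element in a prescribed double coset is precisely the nontrivial content of the Kottwitz--Rapoport conjecture that \cite{Ga} establishes; for split $GL_n$ with $\l = (1,0,\dots,0)$, for example, a basic representative in $\breve K \dot t^\l \breve K$ is a $p$-twisted cyclic permutation matrix, not a central twist of $\dot t^\l$. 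Beyond this, you rightly note that the compatibility of the Hodge--Newton step with $\s$-averaging and the reduction to an unramified group are delegated to \cite[Theorem 7.1]{He14}; that makes your proposal a roadmap to the cited results rather than an independent proof, which is in fact exactly the paper's own treatment of this statement.
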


\subsection{Proof of Theorem \ref{cordial-1}} 

Let $w \in \tW$. By definition, $[b_w]$ is the unique maximal $\s$-conjugacy class that intersects $\breve I \dot w \breve I$. By \cite[Corollary 5.6]{Vie-Ann}, $[b_w]$ is also the unique maximal $\s$-conjugacy class that intersects $\overline{\breve I \dot w \breve I}$. Note that $$\breve I \dot w_\BS \dot t^\l \breve I \subset \breve K \dot t^\l \breve K \subset \overline{\breve I \dot w_\BS \dot t^\l \breve I}.$$ By Theorem \ref{mazur}, $[b_{w_\BS t^\l}]=[\dot t^\l]$.

On the other hand, $t^\l$ is a minimal length element in its $\s$-conjugacy class. Hence by Theorem \ref{2.2}, $[b_{t^\l}]=[\dot t^\l]$. 

Since $t^\l \le x t^\l \le w_{\BS} t^\l$ for any $x \in W_0$, we have that $$\breve I \dot t^\l \breve I \subset \overline{\breve I \dot x \dot t^\l \breve I} \subset \overline{\breve I \dot w_\BS \dot t^\l \breve I}.$$

Hence $[\dot t^\l]=[b_{t^\l}] \le [b_{x t^\l}] \le [b_{w_{\BS} t^\l}]=[\dot t^\l]$. Thus $[b_{x t^\l}]=[\dot t^\l]$. 

Now $\nu_{b_{x t^\l}}=\l^\diamondsuit$ and $\text{def}(b_{x t^\l})=0$. Hence $$\ell(x t^\l)-\ell(\eta_\s(x t^\l))=\ell(x)+\ell(t^\l)-\ell(x)=\ell(t^\l)=\langle \l, 2\rho\rangle=\langle \l^\diamondsuit, 2 \rho\rangle.$$ Thus $x t^\l$ is a cordial element. 

\subsection{Another family of cordial element} 
Let $w \in \tW$ such that $w \mathfrak a$ is in the antidominant Weyl chamber, i.e. $w=w_{\BS} t^\l y$, where $\l$ is a dominant coweight and $y \in W_0$ with $t^\l y \in {}^{\BS} \tW$. Then $\eta_\s(w)=\s \i(y) w_{\BS}$. Set $w'=\s \i(y) w_{\BS} t^\l$. Then $\eta_\s(w')=\eta_\s(w)$. Moreover, it is easy to see that $w \approx_\s w'$. Hence $w \Leftrightarrow_\s w'$. By Theorem \ref{cordial-1}, $w'$ is a cordial element. Hence 

(a) $w$ is also a cordial element. 

This is first proved by Mili\'cevi\'c and Viehmann in \cite[Theorem 1.2 (a)]{MV}\footnote{In \cite{MV}, only the split group is considered. However, the proof of \cite[Theorem 1.2 (a)]{MV} works for any quasi-split groups as well.}. 

It is also worth mentioning that not all the elements of the form $x t^\l$ is $\approx_\s$-equivalent to an element in the antidominant Weyl chamber. 

\section{From $w$ to a cordial element}\label{4}

We first show that 

\begin{proposition}
	Let $\l, \l'$ be dominant coweights. Then the set $$\{\mu'; \mu' \text{ is dominant}, \mu'+\l' \ge_\BZ \l\}$$ contains a unique minimal element with respect the dominance order $\ge_\BZ$. 
\end{proposition}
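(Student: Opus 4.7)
The plan is to reformulate the set and then construct the desired minimum as an explicit coordinate-wise meet.

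Set $\mu_0 := \l - \l'$, so the set in question becomes $T := \{\mu' \text{ dominant} : \mu' \ge_\BZ \mu_0\}$. First I would check $T$ is nonempty: since $2\rho^\vee$ is dominant and is a sum of positive coroots (hence a nonnegative integer combination of simple coroots), the element $\mu_0 + N \cdot 2\rho^\vee$ lies in $T$ for all sufficiently large $N$. Next, $T$ has minimal elements: for $\mu' \in T$ the quantity $\<\mu', 2\rho\>$ is a nonnegative integer, and it drops by at least $2$ along any strict $\ge_\BZ$-step (using $\<\a^\vee, 2\rho\> = 2$ for each simple $\a$), so descending chains terminate.

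The main step is uniqueness of the minimal element. Given $\mu'_1, \mu'_2 \in T$, write $\mu'_i - \mu_0 = \sum_{\a \in \D} m_\a^{(i)} \a^\vee$ uniquely with $m_\a^{(i)} \in \BN$, and set
\[
\mu'' := \mu_0 + \sum_{\a \in \D} \min(m_\a^{(1)}, m_\a^{(2)})\, \a^\vee.
\]
By construction $\mu'' \ge_\BZ \mu_0$ and $\mu'' \le_\BZ \mu'_1, \mu'_2$. Granted that $\mu''$ is dominant, one has $\mu'' \in T$; then if $\mu'_1, \mu'_2$ are both minimal in $T$, each must equal $\mu''$, so $\mu'_1 = \mu'_2$, and a unique minimal element exists.

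What I expect to be the main obstacle is the dominance of $\mu''$. The supports of $\mu'_1 - \mu''$ and $\mu'_2 - \mu''$ in the simple coroot basis are disjoint, since the $\a^\vee$-coefficient of $\mu'_i - \mu''$ equals $\max(0, m_\a^{(i)} - m_\a^{(3-i)})$, which is positive for at most one $i$. Fix a simple root $\a$ and choose $j \in \{1,2\}$ so that $\a$ lies outside the support of $\mu'_j - \mu''$; then
\[
\<\a, \mu''\> = \<\a, \mu'_j\> - \sum_{\b \in \D,\, \b \ne \a} c_\b \<\a, \b^\vee\>
\]
with coefficients $c_\b \ge 0$, and the Cartan integer $\<\a, \b^\vee\>$ is $\le 0$ for distinct simple $\a, \b$. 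Hence $\<\a, \mu''\> \ge \<\a, \mu'_j\> \ge 0$, proving dominance and completing the argument.
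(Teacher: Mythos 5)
Your proof is correct and takes essentially the same approach as the paper: both construct the coordinate-wise meet of any two elements (the paper writes $\mu = \mu'_1 - \g_1 = \mu'_2 - \g_2$ after splitting $\mu'_1 - \mu'_2 = \g_1 - \g_2$ into disjointly supported pieces, which is the same $\mu''$ you define via $\min(m_\a^{(1)}, m_\a^{(2)})$) and verify its dominance using the nonpositivity of off-diagonal Cartan integers. Your extra remarks on nonemptiness of the set and termination of descending chains are fine but were left implicit in the paper.
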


\begin{remark}
	The proof is due to S. Nie. 
\end{remark}

\begin{proof}
	Let $\mu'_1, \mu'_2$ be dominant coweights with $\mu'_1+\l' \ge_\BZ \l$ and $\mu'_2+\l' \ge_\BZ \l$. We may write $\mu'_1-\mu'_2$ as $\mu'_1-\mu'_2=\g_1-\g_2$, where $\g_1 \in \sum_{\a \in J_1} \BZ_{>0} \a$, $\g_2 \in \sum_{\a \in J_2} \BZ_{>0} \a$ for some $J_1, J_2 \subset \D$ with $J_1 \cap J_2=\emptyset$. 
	
	Set $\mu=\mu'_1-\g_1=\mu'_2-\g_2$. Let $\a \in \D$. Since $J_1 \cap J_2=\emptyset$, we have $\a \notin J_1$ or $\a \notin J_2$. If $\a \notin J_1$, then $\<\mu, \a\> \ge \<\mu'_1, \a\> \ge 0$. If $\a \notin J_2$, then $\<\mu, \a\> \ge \<\mu'_2, \a\> \ge 0$. Thus $\mu$ is dominant. By definition, $\mu'_1 \ge_\BZ \mu$ and $\mu'_2 \ge_\BZ \mu$. Moreover, $$\l'-\l+\mu'_1=\l'-\l+\mu'_2+\g_1-\g_2 \in (\sum_{\a \in \D} \BZ_{>0} \a+\g_1-\g_2) \cap \sum_{\a \in \D} \BZ_{>0} \a.$$ Since $J_1 \cap J_2=\emptyset$, we have $\l'-\l+\mu'_1-\g_1 \in \sum_{\a \in \D} \BZ_{>0} \a$. In other words, $\l'+\mu \ge_\BZ \l$. 
	
	The statement is proved. 
\end{proof}

\subsection{The normalized subtraction}
For any dominant coweights $\l, \l'$, we denote by $\l-_{\text{dom}} \l'$ the unique minimal element in the set $$\{\mu'; \mu' \text{ is dominant}, \mu'+\l' \ge_\BZ \l\}.$$ It is easy to see that if $\l-\l'$ is dominant, then $\l-_{\text{dom}} \l'=\l-\l'$. We call $-_{\text{dom}}$ the {\it normalized subtraction}. Now we prove some properties of the normalized subtraction $-_{\text{dom}}$. 

\begin{corollary}\label{cor-1}
	Let $\l, \l'$ be dominant coweights. Let $\l''$ be a dominant coweight with $\l' \ge_\BZ \l''$. Let $x \in W_0$ and let $\mu$ be the unique dominant coweight in the $W_0$-orbit of $\l-x(\l'')$. Then $\mu \ge_\BZ \l-_{\text{dom}} \l'$. 
\end{corollary}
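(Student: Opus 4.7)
The plan is to verify directly that $\mu$ belongs to the defining set of $\l-_{\text{dom}}\l'$, namely
$$\{\mu'; \mu' \text{ is dominant}, \ \mu'+\l' \ge_\BZ \l\},$$
since $\l-_{\text{dom}}\l'$ is the unique $\ge_\BZ$-minimal element of this set. Dominance of $\mu$ is given. So the whole proof reduces to proving the single relation $\mu+\l' \ge_\BZ \l$, i.e.\ $\mu - \l + \l' \in \sum_{\a \in \D} \BN \a^\vee$.

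First I would invoke the standard fact that for any dominant coweight $\nu$ and any $y \in W_0$, one has $\nu \ge_\BZ y(\nu)$ (the dominant representative of a $W_0$-orbit is the largest in the $\ge_\BZ$ order). Applied in two places, this gives:
\begin{itemize}
\item $\mu \ge_\BZ \l - x(\l'')$, since $\mu$ is the dominant element in the $W_0$-orbit of $\l - x(\l'')$;
\item $\l'' \ge_\BZ x(\l'')$, since $\l''$ is dominant.
\end{itemize}
Combining the second with the hypothesis $\l' \ge_\BZ \l''$ (using transitivity of $\ge_\BZ$) yields $\l' \ge_\BZ x(\l'')$.

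Finally I would add the two relations $\mu - \l + x(\l'') \in \sum_\a \BN \a^\vee$ and $\l' - x(\l'') \in \sum_\a \BN \a^\vee$, whose sum gives exactly $\mu + \l' - \l \in \sum_\a \BN \a^\vee$, i.e.\ $\mu + \l' \ge_\BZ \l$. Together with dominance of $\mu$, this puts $\mu$ in the set above, whence $\mu \ge_\BZ \l -_{\text{dom}} \l'$ by the defining minimality property of the normalized subtraction.

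There is no serious obstacle here: the argument is purely a matter of unwinding definitions and invoking the standard lemma that a dominant coweight dominates (in the $\ge_\BZ$ sense) every element of its $W_0$-orbit. The only place where one needs to be a little careful is the hypothesis $\l' \ge_\BZ \l''$ rather than $\l' \ge_\BZ x(\l'')$, which is handled by the orbit-dominance step applied to $\l''$.
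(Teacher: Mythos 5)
Your proof is correct and takes essentially the same route as the paper: both rely on the fact that a dominant coweight $\ge_\BZ$-dominates every element of its $W_0$-orbit, applied to $\mu$ and to $\l''$, together with the hypothesis $\l' \ge_\BZ \l''$, and then simply add the resulting relations to conclude $\mu+\l' \ge_\BZ \l$. The only difference is cosmetic: you combine $\l'' \ge_\BZ x(\l'')$ with $\l' \ge_\BZ \l''$ first to get $\l' \ge_\BZ x(\l'')$, while the paper keeps the three summands separate in a single displayed chain.
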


\begin{proof}
	Note that $\mu-(\l-x(\l'')) \in \sum_{\a \in \D} \BN \a$, $\l''-x(\l'') \in \sum_{\a \in \D} \BN \a$ and $\l'-\l'' \in \sum_{\a \in \D} \BN \a$. Thus \begin{align*} \mu+\l' &=(\mu-\l+x(\l''))+\l-x(\l'')+\l' \\ &=(\mu-\l+x(\l''))+(\l''-x(\l''))+(\l'-\l'')+\l \\ & \ge_\BZ \l.\qedhere\end{align*}
\end{proof}

\begin{corollary}\label{cor-2}
	Let $\l, \l_1, \l_2$ be dominant coweights. Then $$(\l-_{\text{dom}} \l_1)-_{\text{dom}} \l_2=\l-_{\text{dom}} (\l_1+\l_2).$$
\end{corollary}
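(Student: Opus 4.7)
The plan is to verify the identity by checking the two inequalities $\mu \ge_\BZ \mu'$ and $\mu' \ge_\BZ \mu$ directly from the minimality property characterizing the normalized subtraction. Set $\nu := \l -_{\text{dom}} \l_1$, $\mu := \nu -_{\text{dom}} \l_2$ (the LHS), and $\mu' := \l -_{\text{dom}} (\l_1 + \l_2)$ (the RHS); all three are dominant coweights by definition.

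For the direction $\mu \ge_\BZ \mu'$, I would simply chain the defining inequalities $\nu + \l_1 \ge_\BZ \l$ and $\mu + \l_2 \ge_\BZ \nu$ to obtain $\mu + (\l_1 + \l_2) \ge_\BZ \l$. Since $\mu$ is dominant and $\mu'$ is by definition the minimum among dominant coweights satisfying exactly this inequality, minimality forces $\mu \ge_\BZ \mu'$.

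For the reverse direction $\mu' \ge_\BZ \mu$, the key observation is that $\mu' + \l_2$ is automatically dominant, being the sum of two dominant coweights. Combined with the identity $(\mu' + \l_2) + \l_1 = \mu' + (\l_1 + \l_2) \ge_\BZ \l$, the minimality of $\nu$ among dominant coweights satisfying $(\,\cdot\,) + \l_1 \ge_\BZ \l$ then yields $\mu' + \l_2 \ge_\BZ \nu$. Since $\mu'$ itself is dominant, the minimality of $\mu = \nu -_{\text{dom}} \l_2$ gives $\mu' \ge_\BZ \mu$, as desired.

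The only subtlety, and the step easiest to overlook, is the innocuous fact that a sum of two dominant coweights is itself dominant; this is precisely what allows the minimality of $\nu$ to be invoked in the second direction without any auxiliary construction. Without this observation one might be tempted to introduce a ``dominant ceiling'' operation $\g \mapsto \g^\uparrow$ (the minimum dominant coweight $\ge_\BZ \g$ for an arbitrary coweight $\g$, which exists by the same proof as the preceding Proposition since the argument there never uses dominance of $\l$), but no such detour is actually needed.
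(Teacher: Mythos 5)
Your proposal is correct and follows essentially the same two-inequality argument as the paper's own proof. The only difference is presentational: the paper leaves implicit the observation that $\l_2+\mu_2$ is dominant (needed to invoke minimality of $\l-_{\text{dom}}\l_1$), whereas you rightly highlight it as the one subtle step.
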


\begin{proof}
	Set $\mu_1=(\l-_{\text{dom}} \l_1)-_{\text{dom}} \l_2$ and $\mu_2=\l-_{\text{dom}} (\l_1+\l_2)$. By definition, $$(\l_1+\l_2)+\mu_1=\l_1+(\l_2+\mu_1) \ge_\BZ \l_1+(\l-_{\text{dom}} \l_1) \ge_\BZ \l.$$ So $\mu_1 \ge_\BZ \mu_2.$
	
	On the other hand, $$\l_1+(\l_2+\mu_2)=(\l_1+\l_2)+\mu_2 \ge_\BZ \l.$$ So by definition, $\l_2+\mu_2 \ge_\BZ \l-_{\text{dom}} \l_1$ and $\mu_2 \ge_\BZ \mu_1$. 
\end{proof}

\subsection{The double flat operator}\label{double-flat1} For any subset $J$ of $\BS$, we denote by $\rho^\vee_J$ the dominant coweight with $$\<\rho^\vee_J, \a_s\>=\begin{cases} 1, & \text{ if } s \in J; \\ 0, & \text{ if } s \notin J.\end{cases}$$ Let $\eta^\vee_J$ be the unique dominant coweight in the $W_0$-orbit of $-\s \i(\rho^\vee_J)$. 

Let $w \in \tW$. We write $w$ as $w=x t^\l y$ with $\l$ dominant, $x, y \in W_0$ and $t^\l y \in {}^{\BS} \tW$. Let $J=\{s \in \BS; s y<y\}$. Since $t^{\l} y \in {}^{\BS} \tW$, we have $\<\l, \a_s\>>0$ for any $s \in J$. In particular, $\l-\rho^\vee_J$ is dominant.  We set $$\l_w^{\flat\mkern-2.4mu\flat}=(\l-\rho^\vee_J)-_{\text{dom}} \eta^\vee_J=\l-_{\text{dom}} (\rho^\vee_J+\eta^\vee_J).$$

The main result of this section is as follows. 

\begin{theorem}\label{rightarrow}
Assume that $\mathbb G$ is simple and quasi-split over $F$. Let $w \in \tW$. Suppose that $\supp_\s(\eta_\s(w))=\BS$. Then there exists a dominant coweight $\g$ with $\g \ge_\BZ \l_w^{\flat\mkern-2.4mu\flat}$ and $a \in W_0$ with $\supp_\s(a)=\BS$ such that $$w \Rightarrow_\s a t^\g.$$ 
\end{theorem}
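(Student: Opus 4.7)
The plan is to prove this by induction on $\ell(y)$, where $w = x t^\l y$ is the unique decomposition with $\l$ dominant, $x, y \in W_0$, and $t^\l y \in {}^{\BS}\tW$. In the base case $y = e$ we have $J = \emptyset$, so $\rho^\vee_J = \eta^\vee_J = 0$ and hence $\l_w^{\flat\mkern-2.4mu\flat} = \l$. The element $w = x t^\l$ already has the target form; the support hypothesis gives $\supp_\s(x) = \supp_\s(\eta_\s(w)) = \BS$, so we take $a = x$ and $\g = \l$ and use $w \Rightarrow_\s w$ trivially.

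For the inductive step, suppose $\ell(y) > 0$ and fix $s \in J$ (so $sy < y$). Since $t^\l y \in {}^{\BS}\tW$ and $sy < y$, we must have $\langle \l, \a_s \rangle \ge 1$, which keeps $\l - \rho^\vee_{\{s\}}$ dominant. The key step is to construct, via Deligne--Lusztig reduction, an element $w_1$ with standard form $x_1 t^{\l_1} y_1$ satisfying: (i) $w \Rightarrow_\s w_1$; (ii) $\ell(y_1) < \ell(y)$; (iii) $\supp_\s(\eta_\s(w_1)) = \BS$; and (iv) the dominant representative of the $W_0$-orbit of $\l - \l_1$ is bounded by $\rho^\vee_{\{s\}} + \eta^\vee_{\{s\}}$ in the sense required to feed Corollary \ref{cor-1}. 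Such a $w_1$ is produced by first applying a chain of cyclic $\s$-shifts (length-preserving $\approx_\s$-moves, which preserve dimensions of affine Deligne--Lusztig varieties by Proposition \ref{DLReduction}(1)) to bring $w$ into a form amenable to a length-two reduction, then applying Proposition \ref{DLReduction}(2) against a pair $(s', \s(s'))$ where $s' \in \BS$ is a $\s$-conjugate of $s$; a direct dimension/virtual-dimension comparison then packages the resulting chain into the relation $w \Rightarrow_\s w_1$.

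Applying the induction hypothesis to $w_1$ yields $a \in W_0$ with $\supp_\s(a) = \BS$ and a dominant $\g \ge_\BZ \l_{w_1}^{\flat\mkern-2.4mu\flat}$ such that $w_1 \Rightarrow_\s a t^\g$. Transitivity of $\Rightarrow_\s$ then gives $w \Rightarrow_\s a t^\g$. The comparison $\g \ge_\BZ \l_w^{\flat\mkern-2.4mu\flat}$ is obtained by combining Corollary \ref{cor-1} (applied to the single-step estimate (iv), yielding $\l_1 \ge_\BZ \l -_{\text{dom}}(\rho^\vee_{\{s\}} + \eta^\vee_{\{s\}})$) with Corollary \ref{cor-2}, which collapses the iterated normalized subtractions accumulated along the induction path into a single $\l -_{\text{dom}}(\rho^\vee_J + \eta^\vee_J) = \l_w^{\flat\mkern-2.4mu\flat}$.

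The main obstacle is the explicit construction of the reduction $w \Rightarrow_\s w_1$ in the inductive step. One must simultaneously arrange that $\ell(y)$ strictly decreases in the standard decomposition, that the support condition on $\eta_\s$ is preserved, and that the change $\l \to \l_1$ is controlled by $\rho^\vee_{\{s\}} + \eta^\vee_{\{s\}}$ rather than anything larger. Because $\s$ permutes $\BS$ nontrivially in the quasi-split setting, the length-two reduction is performed against $\s(s')$ rather than $s'$, and this Frobenius asymmetry is exactly why $\eta^\vee_{\{s\}}$ (the dominant representative of $-\s^{-1}(\rho^\vee_{\{s\}})$) appears alongside $\rho^\vee_{\{s\}}$ in the definition of $\l_w^{\flat\mkern-2.4mu\flat}$. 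Handling the case analysis depending on the length of the $\s$-orbit of $s$ in $\BS$, and tracking how the descent data of $y$ interacts with the cyclic shifts, will be the technical heart of the argument.
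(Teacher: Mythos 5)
Your proposal takes a genuinely different route from the paper, and the route you propose contains a real gap that you yourself flag but do not close.

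The paper does \emph{not} induct on $\ell(y)$ and does \emph{not} use the step-by-step Deligne--Lusztig reduction of Proposition~\ref{DLReduction}. Instead it performs a single, explicit factorization $w = w_1 w_2$ with $w_1 = x z^{-1} t^{\l_w - \rho^\vee_J} y'$ and $w_2 = (y')^{-1} z t^{\rho^\vee_J} y$, where $z, x', y'$ are chosen via a careful parabolic decomposition of $\s^{-1}(y)x$ relative to $J' = \{s : s(\l_w - \rho^\vee_J) = \l_w - \rho^\vee_J\}$ and relative to the stabilizer $K$ of $\g$. The relation $w \Rightarrow_\s a t^\g$ is then established in one shot by ``cyclically shifting'' $w_1$ to the other side (passing from $X_w(b)$ to a universally homeomorphic variety $X_1$) and then applying the fiber-dimension bounds of Proposition~\ref{GHDL} twice, once with $w'' = ww'$ and once with $w'' = w * w'$. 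The length bookkeeping $\ell(w) = \ell(w_1) + \ell(w_2)$, $\ell((y')^{-1}z) + \ell(x'y') = \ell(\s^{-1}(y)x)$, and $\supp_\s(a) = \BS$ is imported from the proof of \cite[Proposition~11.5]{He14}. So all of the descent $J$ is absorbed at once; there is no recursion.

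Your plan, by contrast, tries to peel off one $s \in J$ at a time. The crux of your inductive step --- producing $w_1 = x_1 t^{\l_1} y_1$ with $w \Rightarrow_\s w_1$, $\ell(y_1) < \ell(y)$, $\supp_\s(\eta_\s(w_1)) = \BS$, and the change $\l \to \l_1$ bounded by $\rho^\vee_{\{s\}} + \eta^\vee_{\{s\}}$ --- is not carried out; you explicitly defer it as ``the technical heart.'' That heart is in fact the entire content of the theorem, so as written the proof is incomplete. Moreover, even granting such a $w_1$, the telescoping you invoke via Corollary~\ref{cor-2} requires that $\l_{w_1}^{\flat\mkern-2.4mu\flat} \ge_\BZ \l_w^{\flat\mkern-2.4mu\flat}$, which in turn needs both a bound on $\l - \l_1$ \emph{and} control on the descent set $J_1$ of $y_1$: a cyclic-shift plus length-two reduction does not in general send $J$ to $J \setminus \{s\}$, and $J_1$ may acquire new elements, contributing new terms $\rho^\vee_{J_1} + \eta^\vee_{J_1}$ that your accounting has not budgeted for. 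You would also need to verify that the chosen length-two reduction (to $sw$ or to $sw\s(s)$) yields a genuine $\Rightarrow_\s$ relation, i.e.\ that $d_w(b) - d_{w_1}(b) \le 1$ uniformly; this hinges on how $\eta_\s$ changes under the shift, and you have not tracked it. None of these are automatic. Your plan is not unreasonable as an alternative strategy, but until the reduction step and the $J$-bookkeeping are nailed down, it does not yet constitute a proof; the paper's single-shot factorization sidesteps all of these issues by design.
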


\subsection{A convenient notation} Following \cite[\S 2.4]{GH}, we give a convenient notation for varieties of tuples of elements in $Fl$. We explain the notation by examples. Let $\co_w=\{(g \breve I, g \dot w \breve I); g \in \breve G\} \subset Fl\times Fl$. Then we set
\begin{align*}
\{ \xymatrix{ g \ar[r]^-w & g'' \ar[r]^-{w'} & g' } \}=\{  (g, g', g'') \in (Fl)^3;\ (g,g'')\in \co_{w},\ (g'',g')\in \co_{w'} \}.
\end{align*}
Similarly,
\begin{align*}
\{ \xymatrix{ g \ar[r]^-w \ar@/_2pc/[rr]_-{w''} & g'' \ar[r]^-{w'} & g' } \}=\{ ( g, g', g'') \in (Fl)^3;\ (g,g'')\in \co_{w},\ (g'',g')\in \co_{w'},\ (g,g')\in\co_{w''} \}.
\end{align*}
The affine Deligne-Lusztig varieties can be written as
\[
X_w(b) = \{ \xymatrix{ g \ar[r]^-w & b\s(g) } \}.
\]
In all these cases, we do not distinguish between the sets given by the conditions on the relative position, and the corresponding locally closed sub-ind-schemes of the product of affine flag varieties. The following result is proved in \cite[Proposition 2.5.2]{GH}. 

\begin{proposition}\label{GHDL}
Let $w, w'\in \tW$, and let $w'' \in \{ ww', w \ast w' \}$. Then the map 
\[
\pi\colon\{ \xymatrix{ g \ar[r]^-w \ar@/_2pc/[rr]_-{w''} & g'' \ar[r]^-{w'} & g' } \}
\longrightarrow
\{ \xymatrix{ g \ar[r]^-{w''} & g'}\}, \qquad (g, g', g'') \mapsto (g, g') 
\] is surjective. Moreover all the fibers have dimension
\[
\dim\pi^{-1}( ( g, g') ) \ge \left\{
\begin{array}{ll}
\ell(w) + \ell(w') - \ell(w*w'), & \text{if } w'' = w \ast w';\\
\frac 12  \bigl( \ell(w) + \ell(w') - \ell(ww') \bigr), & \text{if } w'' = ww'.
\end{array}
\right.
\]
\end{proposition}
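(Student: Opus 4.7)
The plan is to prove the proposition by induction on $\ell(w')$, using as the main engine the classical product formula for Iwahori double cosets: for $u \in \tW$ and $s \in \tS$,
\[
\breve I \dot u \breve I \cdot \breve I \dot s \breve I \;=\;
\begin{cases}
\breve I \dot{us}\breve I & \text{if } \ell(us)=\ell(u)+1, \\
\breve I \dot u\breve I \cup \breve I \dot{us}\breve I & \text{if } \ell(us)=\ell(u)-1.
\end{cases}
\]
This product formula (together with its geometric avatar that $\breve I \dot s \breve I/\breve I \cong \mathbb A^1$) completely controls the one-step convolution of relative positions.

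For the base case I would argue directly. When $\ell(w')=0$, $w' \in \Omega$, the map $\pi$ is a bijection onto $\co_{ww'} = \co_{w\ast w'}$ and both stated bounds are $0$. When $w'=s$ is a simple reflection: if $\ell(ws)=\ell(w)+1$ then $w \ast s = ws = ww'$ and the product formula forces the fiber of $\pi$ to be a single point, matching the bound $0$. If $\ell(ws)=\ell(w)-1$, then $w \ast s = w$ while $ww'=ws$; for $(g,g')$ in either of the orbits $\co_w$ or $\co_{ws}$, the fiber consists of $g''$ with $(g,g'')\in\co_w$ and $(g'',g')\in\co_s$, which is an open subset of $g'\breve I \dot s \breve I/\breve I \cong \mathbb A^1$ and so has dimension $1$, matching both $\ell(w)+1-\ell(w\ast s)=1$ and $\frac{1}{2}(\ell(w)+1-\ell(ws))=1$.

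For the inductive step, I would write a reduced decomposition $w' = s_1 v$ with $s_1 \in \tS$ and $\ell(v)=\ell(w')-1$, and introduce the auxiliary variety
\[
Y_{g,g'} = \{(g'',h) : (g,g'')\in\co_w,\ (g'',h)\in\co_{s_1},\ (h,g')\in\co_v\}.
\]
Since $\ell(s_1 v) = \ell(s_1) + \ell(v)$, the simple-reflection case applied to the pair $(s_1, v)$ shows that the first-coordinate projection $(g'',h) \mapsto g''$ identifies $Y_{g,g'}$ with $\pi^{-1}(g,g')$. Projecting onto the $h$-coordinate instead realizes $Y_{g,g'}$ as a union, indexed by the relative position of $(g,h)$, of pieces each controlled by the $s_1$-step (via the simple-reflection case) fibered over the analogous configuration space for the shorter pair $(w, v)$, to which the inductive hypothesis applies.

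The main obstacle will be the bookkeeping needed to keep the two cases $w''=w \ast w'$ and $w''=ww'$ separated throughout the induction: the $\ast$-bound is integer-valued while the ordinary product bound carries a factor $\frac{1}{2}$, and this half arises precisely because in the $ww'$ case multiple intermediate relative positions of $(g,h)$ — coming from the $\breve I \dot u \breve I \cup \breve I \dot{us}\breve I$ branch of the product formula — can contribute to a single fiber. Verifying that the contributions from the $s_1$-step and from the inductive $v$-step assemble to exactly these two different bounds, including the length identities relating $\ell(ww')$ and $\ell(w\ast w')$ to the corresponding quantities for $(w,v)$ after the $s_1$-step, is the delicate part.
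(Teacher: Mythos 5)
Your plan---induction on $\ell(w')$, using the one-step Iwahori convolution formula as the engine---is a sound route to the proposition, and your base cases ($\ell(w')=0$ and $w'=s$ a simple reflection) are handled correctly. The paper itself does not reprove the statement but cites it from \cite[Proposition 2.5.2]{GH}, where the argument is in the same inductive spirit; so this is essentially the standard proof.

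A word of caution on the inductive step as you wrote it, though. You take $w'=s_1v$ (so the auxiliary variety $Y_{g,g'}$ inserts the $s_1$-step immediately after $w$), but you then say the base of the fibration is ``the analogous configuration space for the shorter pair $(w,v)$.'' With left-peeling, the relative position of $(g,h)$ ranges over $\{w, ws_1\}$, so the induction hypothesis must be invoked for the pair $(w\ast s_1, v)$ or $(w, v)$ depending on that position, and you must keep track of which target orbit ($\co_{(w\ast s_1)\ast v}$ versus $\co_{(w\ast s_1)v}$, etc.) the induction hypothesis is applied to; the first component of the ``shorter pair'' may well be \emph{longer} than $w$. The cleaner formulation of your plan peels from the right, $w'=vs_1$: then $Y_{g,g'}=\{(g'',h):(g,g'')\in\co_w,\,(g'',h)\in\co_v,\,(h,g')\in\co_{s_1}\}$, the $h$-projection has fibers that are genuinely the fibers for the pair $(w,v)$, and the base is a subset of the $\co_{s_1}$-line controlled by your simple-reflection base case applied to the pair $(wv, s_1)$ or $(w\ast v, s_1)$. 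In each of the four sub-cases (according to whether $w''=ww'$ or $w\ast w'$, and whether appending $s_1$ raises or lowers length), the $s_1$-contribution is $0$ or $1$, the inductive contribution is the bound for $(w,v)$, and the totals assemble exactly to the stated bounds. The $\tfrac12$ in the $ww'$ case comes out naturally: each step in a reduced word for $w'$ that shortens the accumulated product gains one fiber dimension, and there are exactly $\tfrac12\bigl(\ell(w)+\ell(w')-\ell(ww')\bigr)$ such steps. So the plan is correct; you have identified but not executed the bookkeeping, and the $s_1v$ versus $vs_1$ mismatch should be resolved (right-peeling matches the rest of your description).
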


\subsection{Proof of Theorem \ref{rightarrow}} We write $w$ as $w=x t^{\l_w} y$ with $x, y \in W_0$ and $t^{\l_w} y \in {}^{\BS} \tW$. Let $J=\{s \in \BS; s y<y\}$. 

Let $J'=\{s \in \BS; s(\l_w-\rho^\vee_J)=\l_w-\rho^\vee_J\}$. We write $\s \i(y) x$ as $\s \i(y)x=x' z$ for some $w \in W_0^{J'}$ and $z \in W_{J'}$. Let $\g$ be the unique dominant coweight in the $W_0$-orbit of $\l_w-\rho^\vee_J+(x') \i \s \i(\rho^\vee_J)$.  By Corollary \ref{cor-1}, $\g \ge_\BZ \l_w^{\flat\mkern-2.4mu\flat}$.

Let $K=\{s \in \BS; s(\g)=\g\}$ and $y' \in W_0^K$ with $\l_w-\rho^\vee_J+(x') \i \s \i(\rho^\vee_J) =y'(\g)$. 

Set $w_1=x z \i t^{\l_w-\rho^\vee_J} y'$ and $w_2=(y') \i z t^{\rho^\vee_J} y$. Then $w=w_1 w_2$. Set $a =((y')\i z) * \s(x' y')$. Then \begin{align*} \supp_\s(a) &=\supp_\s((y') \i z) \cup \supp_\s(\s(x'y'))=\supp_\s((y') \i z) \cup \supp_\s(x'y') \\ & \supset \supp_\s(x' y' (y') \i z)=\supp_\s(x' z)=\supp_\s(\eta_\s(w))=\BS.\end{align*}

By the proof of \cite[Proposition 11.5]{He14}, $\ell(w)=\ell(w_1)+\ell(w_2)$, $\ell((y') \i z)+\ell(x' y')=\ell(\s \i(y) x)$ and $\supp_\s(a) = \BS$. 

Now
\[
X_{w}(b)= \{ \xymatrix{ g \ar[r]^-w & b \sigma(g) } \}=\{  \xymatrix{ g \ar[r]^-{w_1} & g_1 \ar[r]^-{w_2} & b \sigma(g) }\}.
\]
Set
\begin{align*}
X_1= &  \{ \xymatrix{ g_1 \ar[r]^-{w_2} & g_2 \ar[r]^-{\s(w_1)} & b \sigma(g_1) } \} \\
\cong & \{ \xymatrix{ g_1 \ar[r]^-{(y')\i z} &  g_3 \ar[r]^-{t^{\rho^\vee_J} y} & g_2 \ar[r]^-{\s(w_1)} & b \sigma(g_1)  } \}.
\end{align*}
The map $(g, g_1) \mapsto (g_1, b \sigma(g))$ is a universal homeomorphism from $X_{x}(b)$ to $X_1$.
We have that $y \s(x z \i)=\s(\s \i(y) x z \i)=\s(x')$ and $$t^{\rho^\vee_J} y \s(w_1)=t^{\rho^\vee_J} \s(x' t^{\l_w-\rho^\vee_J} y')=\s(x') \s(t^{y'(\g)}) \s(y')=\s(x' y' t^\g).$$ 
Let 
\begin{gather*}
X_2=\{ \xymatrix{  g_1 \ar[r]^-{(y')\i z} &  g_3 \ar[r]^-{t^{\rho^\vee_J} y} \ar@/_2pc/[rr]_-{\s(w y' t^\g)} & g_2 \ar[r]^-{\s(w_1)} & b \sigma(g_1) } \} \subset X_1; \\
X_3=\{ \xymatrix{ g_1  \ar[r]^-{(y')\i z} & g_3  \ar[r]^-{\s(x' y' t^\g)} & b \s(g_1)}  \}=\{ \xymatrix{ g_1 \ar[r]^-{(y')\i z} & g_3 \ar[r]^-{\s(x' y')} &  g_4 \ar[r]^-{\s(t^\g)} & b \s(g_1)}\},
\end{gather*}

By Proposition \ref{GHDL},
$$\dim(X_{x}(b)) \ge \dim(X_2) \ge \dim(X_3)+\frac{\ell(t^{\rho^\vee_J} y)+\ell(w_1)-\ell(x' y' t^\g)}{2}.$$ 

Recall that $a=(y \i z)*(x' y')$. We set
\begin{gather*}
X_4=\{ \xymatrix{ g_1 \ar[r]^-{(y')\i z} \ar@/_2pc/[rr]_-{a} & g_3 \ar[r]^{\s(x' y')} &  g_4 \ar[r]^-{\s(t^\g)} & b \s(g_1)} \} \subset X_3, \\
X_5=\{ \xymatrix{ g_1 \ar[r]^-a & g_4 \ar[r]^-{\s(t^\g)} & b \s(g_1)}\}.
\end{gather*}

By Proposition \ref{GHDL}, $$\dim(X_3) \ge \dim(X_5)+\ell((y') \i z)+\ell(x' y')-\ell(a)=\ell(\eta_\s(w))-\ell(a).$$ 

Notice that $\ell(a t^\g)=\ell(a)+\ell(t^\g)$. Thus the map $(g_1, g_4) \mapsto g_1$ gives an isomorphism $X_5 \cong X_{a t^\g}(b)$. If $X_{a t^\g}(b)\ne \emptyset$, then $X_{x}(b)\ne\emptyset$. Note that $\ell((y') \i z)+\ell(x' y')=\ell(\s \i(y) x)=\ell(\eta_\s(w))$. Therefore \begin{align*} & \dim X_w(b)-\dim X_{a t^\g}(b) \\ & \ge \frac{\ell(t^{\rho^\vee_J} y)+\ell(w_1)-\ell(x' y' t^\g)}{2}+\ell(\eta_\s(w))-\ell(a)\\ &=\frac{\ell(t^{\rho^\vee_J} y)+\ell((y') \i z)+\ell(w_1)-\ell(x' y' t^\g)+\ell(x' y')-\ell(a)}{2}+\frac{\ell(\eta_\s(w))}{2}-\frac{\ell(a)}{2} \\ & =\frac{\ell(w_2)+\ell(w_1)-\ell(a t^\g)}{2}+\frac{\ell(\eta_\s(w))}{2}-\frac{\ell(a)}{2} \\ &=\frac{\ell(w)-\ell(a t^\g)}{2}+\frac{\ell(\eta_\s(w))}{2}-\frac{\ell(a)}{2}=d_{w}(b)-d_{a t^\g}(b).\end{align*}

So $w \Rightarrow_\s a t^\g$. The theorem is proved.

\section{Proof of main theorem}

Now we state our main result. 

\begin{theorem}\label{main}
	Suppose that $\BG$ is simple and quasi-split over $F$. Let $b \in \breve G$ and $w \in \tW$ such that $w \mathfrak a$ is in a Shrunken Weyl chamber, $\l_w^\diamondsuit-\nu_b \in \sum_{\a \in \D} \BQ_{>0} \a^\vee$ and $(\l_w^{\flat\mkern-2.4mu\flat})^\diamondsuit \ge \nu_b$. Then $X_w(b) \neq \emptyset$ if and only if $\k(b)=\k(w)$ and $\supp_\s(\eta_\s(w))=\BS$. In this case, $\dim X_w(b)=d_w(b)$. 
\end{theorem}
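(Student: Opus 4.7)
The plan is to reduce the problem to a cordial companion element of the form $at^\g$ (via Theorem~\ref{rightarrow} and Theorem~\ref{cordial-1}), establish nonemptiness of $X_{at^\g}(b)$ by propagating from the basic stratum using Theorem~\ref{cordial}(1), and combine the cordial dimension equality with the virtual dimension upper bound Theorem~\ref{dim-vir} to determine $\dim X_w(b)$. The two necessary conditions $\k(b)=\k(w)$ and $\supp_\s(\eta_\s(w))=\BS$ will be handled separately via standard obstructions.

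For necessity, $\k(b)=\k(w)$ is immediate from the constancy of $\k$ on double cosets $\breve I \dot w \breve I$ and on $\s$-conjugacy classes. For the support condition, if $J:=\supp_\s(\eta_\s(w)) \subsetneq \BS$, then under the shrunken hypothesis $w$ is a $\s$-stable $P_J$-alcove element in the sense of \cite{GHN}; the associated Hodge-Newton decomposition forces $\l_w^\diamondsuit - \nu_b$ to be supported on the coroots $\a^\vee$ with $\a \in J$, contradicting the strict positivity hypothesis $\l_w^\diamondsuit - \nu_b \in \sum_{\a \in \D}\BQ_{>0}\a^\vee$.

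For sufficiency, apply Theorem~\ref{rightarrow} to obtain $a \in W_0$ with $\supp_\s(a)=\BS$ and a dominant coweight $\g \ge_\BZ \l_w^{\flat\mkern-2.4mu\flat}$ such that $w \Rightarrow_\s at^\g$. By Theorem~\ref{cordial-1}, $at^\g$ is cordial with $[b_{at^\g}]=[\dot t^\g]$. The reduction preserves the Kottwitz invariant in $\Omega_\s$, so $\k(at^\g)=\k(b)$; together with $\nu_b \le (\l_w^{\flat\mkern-2.4mu\flat})^\diamondsuit \le \g^\diamondsuit$ (the first inequality from the hypothesis and the second from $\g \ge_\BZ \l_w^{\flat\mkern-2.4mu\flat}$), this gives $[b] \le [\dot t^\g]=[b_{at^\g}]$ in $B(\BG)$. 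Let $[b_0]$ be the basic class with $\k(b_0)=\k(w)$, so $[b_0] \le [b]$. Since $\eta_\s(at^\g)=a$ has full $\s$-support, the GHKR nonemptiness result for basic classes (proved in \cite{He14,GHN}) yields $X_{at^\g}(b_0) \ne \emptyset$. Theorem~\ref{cordial}(1) applied to the chain $[b_0] \le [b] \le [b_{at^\g}]$ gives $X_{at^\g}(b) \ne \emptyset$, and Theorem~\ref{cordial}(2) identifies its dimension as $d_{at^\g}(b)$. Finally, $w \Rightarrow_\s at^\g$ forces $X_w(b)\ne\emptyset$ with $\dim X_w(b) \ge d_w(b)$, while Theorem~\ref{dim-vir} supplies $\dim X_w(b) \le d_w(b)$, yielding the claimed equality.

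The hardest part of the argument will be the necessity of $\supp_\s(\eta_\s(w))=\BS$: it requires a careful invocation of the $P$-alcove/Hodge-Newton framework of \cite{GHN} to translate the combinatorial failure into a Newton-point constraint, then combining this with the strict positivity hypothesis to reach a contradiction. A secondary bookkeeping point is verifying that the reduction underlying Theorem~\ref{rightarrow} preserves the Kottwitz invariant in $\Omega_\s$ (concretely that $\rho^\vee_J - \s^{-1}(\rho^\vee_J)$ vanishes in the $\s$-coinvariants), which is needed to place $[b_{at^\g}]$ in the same Kottwitz fiber as $[b]$.
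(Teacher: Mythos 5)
Your proof follows essentially the same route as the paper: necessity via the $(J,x,\s)$-alcove framework and Theorem~\ref{alcove-empty}, sufficiency by passing to the cordial element $at^\g$ via Theorem~\ref{rightarrow}, establishing nonemptiness for the basic class, propagating up the Newton poset with Theorem~\ref{cordial}(1), and matching the virtual dimension upper bound of Theorem~\ref{dim-vir} with the lower bound supplied by $\Rightarrow_\s$ and Theorem~\ref{cordial}(2). The one point worth flagging is your citation for the base nonemptiness $X_{at^\g}(b_0)\ne\emptyset$: the shrunken-chamber theorem of \cite{He14} does not directly apply because $at^\g$ need not lie in a shrunken Weyl chamber (e.g.\ when $\g$ is small), so one either needs the full (non-shrunken) criterion of \cite{GHN} or, more cleanly, the specific statement \cite[Theorem 2.27]{He-CDM} for elements of the form $a t^\g$ with $\supp_\s(a)=\BS$ and $\g$ dominant, which is what the paper invokes. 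Your remark that $\rho^\vee_J-\s^{-1}(\rho^\vee_J)$ vanishes in $\s$-coinvariants, so that $\k(at^\g)=\k(w)$, is a useful explicitation of a step the paper leaves implicit.
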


\begin{remark}
It is worth mentioning that in most cases, $\l_w-\l_w^{\flat\mkern-2.4mu\flat}$ is dominant and nonzero. In this case, $\l_w^\diamondsuit-(\l_w^{\flat\mkern-2.4mu\flat})^\diamondsuit \in \sum_{\a \in \D} \BQ_{>0} \a^\vee$. However, if $\BG$ is split over $F$ and $\l_w$ is a minuscule coweight, then $\l_w^{\flat\mkern-2.4mu\flat}=\l_w$. Thus the assumption $\l_w^\diamondsuit-\nu_b \in \sum_{\a \in \D} \BQ_{>0} \a^\vee$ is needed in our statement. 
\end{remark}

We first prove the theorem and then discuss the assumptions in the statement. In particular, we will give a simple condition where the assumptions are satisfied in Corollary \ref{7.4}. 

\subsection{The $(J, w, \d)$-alcove elements}
We recall the alcove elements introduced in \cite{GHKR2} for split groups and then generalized to quasi-split groups in \cite{GHN}. 

For any $J \subset \BS$ with $\s(J)=J$, we denote by $\mathbb M_J \subset \mathbb G$ the standard Levi subgroup corresponding to $J$ and $\mathbb P_J \supset \mathbb M_J$ be the standard parabolic subgroup. Let $\mathbb U_{\mathbb P_J}$ be the unipotent radical of $\mathbb P_J$. 

Let $J \subset \BS$ with $\s(J)=J$ and $x \in W_0$. Let $w\in\tW$.  We say that $w$  is a \emph{$(J, x, \s)$-alcove element} if $x \i w \s(x) \in \tW_J$ and $\mathbb {}^{\dot x} \mathbb U_{\mathbb P_J}(\breve F) \cap {}^{\dot w} \breve I \subseteq \mathbb {}^{\dot x} \mathbb U_{\mathbb P_J}(\breve F) \cap \breve I$. The following result is proved in \cite[Corollary 3.6.1]{GHN}\footnote{In loc.cit., we put the assumption that $[b]$ is basic. In fact, the assumption is required in \cite[Propositon 3.5.1 \& Remark 3.6.2]{GHN} but is not need in \cite[Corollary 3.6.1]{GHN}.}. 

\begin{theorem}\label{alcove-empty}
Let $[b] \in B(\mathbb G)$ and $w \in \tW$. Suppose that $w$ is a $(J, x, \s)$-alcove element. Let $\k_{\mathbb M_J}$ be the Kottwitz map for the group $\mathbb M_J$. If $\k_{\mathbb M_J}(x \i w \s(x)) \neq \k_{\mathbb M_J}(b')$ for any $b' \in [b] \cap \mathbb M_J(\breve F)$, then $X_w(b)=\emptyset$. 
\end{theorem}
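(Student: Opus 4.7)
The plan is to prove Theorem \ref{main} by combining the reduction in Theorem \ref{rightarrow} with the cordial elements of Theorem \ref{cordial-1}, and by handling the emptiness half via the $(J,x,\s)$-alcove criterion of Theorem \ref{alcove-empty}.

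\emph{Necessity.} Assume $X_w(b)\neq\emptyset$. Then $\k(b)=\k(w)$ is immediate since $\k$ is a $\s$-conjugation invariant on $\breve G$ and factors through $\tW$. For $\supp_\s(\eta_\s(w))=\BS$, I argue by contraposition. Supposing that $J:=\BS\setminus\supp_\s(\eta_\s(w))$ is a nonempty $\s$-stable subset of $\BS$, the Shrunken hypothesis on $w\fka$, together with the factorization $w=xt^{\l_w}y$, lets one produce $x_0\in W_0$ so that $w$ is a $(J,x_0,\s)$-alcove element; this is the standard construction from \cite{GHKR2} extended to quasi-split groups in \cite{GHN}. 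Theorem \ref{alcove-empty} then forces $X_w(b)=\emptyset$ unless $\k_{\BM_J}(x_0^{-1}w\s(x_0))=\k_{\BM_J}(b')$ for some $b'\in[b]\cap\BM_J(\breve F)$. The strict dominance assumption $\l_w^\diamondsuit-\nu_b\in\sum_\a\BQ_{>0}\,\a^\vee$ forces $\nu_b$ to be strictly below $\l_w^\diamondsuit$ in every coroot direction; tracking Newton points through the Levi inclusion $\BM_J\subseteq\BG$ shows no such $b'$ exists, so $X_w(b)=\emptyset$, contradicting the assumption.

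\emph{Sufficiency and dimension.} Assume $\k(b)=\k(w)$ and $\supp_\s(\eta_\s(w))=\BS$. Applying Theorem \ref{rightarrow}, pick a dominant coweight $\g\ge_\BZ\l_w^{\flat\mkern-2.4mu\flat}$ and $a\in W_0$ with $\supp_\s(a)=\BS$ and $w\Rightarrow_\s at^\g$. By Theorem \ref{cordial-1}, $at^\g$ is cordial with $[b_{at^\g}]=[\dot t^\g]$. I check $[b]\le[\dot t^\g]$: the construction of $\g$ inside Theorem \ref{rightarrow} shifts $\l_w$ only by terms in the coroot lattice modulo $(1-\s)X_*(T)$, so $\k(t^\g)=\k(t^{\l_w})=\k(w)=\k(b)$; and $\nu_b\le(\l_w^{\flat\mkern-2.4mu\flat})^\diamondsuit\le\g^\diamondsuit$, where the second inequality comes from $\g-\l_w^{\flat\mkern-2.4mu\flat}\in\sum_\a\BN\,\a^\vee$ after $\s$-averaging. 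The cordial-element nonemptiness characterization from \cite{MV} then gives $X_{at^\g}(b)\neq\emptyset$, and Theorem \ref{cordial}(2) yields $\dim X_{at^\g}(b)=d_{at^\g}(b)$. Since $w\Rightarrow_\s at^\g$, one concludes $X_w(b)\neq\emptyset$ and $\dim X_w(b)-d_w(b)\ge 0$; combined with the upper bound in Theorem \ref{dim-vir} this forces $\dim X_w(b)=d_w(b)$.

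\emph{Main obstacle.} The serious content of the whole strategy lives upstream, in Theorem \ref{rightarrow} (the explicit reduction path to a cordial element) and Theorem \ref{cordial-1} (the new cordiality family). Within the proof of Theorem \ref{main} itself, the subtlest step is the necessity direction: realizing $w$ as a $(J,x_0,\s)$-alcove element purely from the shrunken data and the hypothesis $\supp_\s(\eta_\s(w))\neq\BS$, and then ruling out all candidate Kottwitz matches in $[b]\cap\BM_J(\breve F)$ using the strict dominance of $\l_w^\diamondsuit$ over $\nu_b$. This is precisely where the coroot-positivity hypothesis is indispensable, as opposed to the weaker $(\l_w^{\flat\mkern-2.4mu\flat})^\diamondsuit$-hypothesis that drives sufficiency.
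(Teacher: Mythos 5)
You have proved the wrong statement. The theorem you were asked to prove is Theorem~\ref{alcove-empty}, the $(J,x,\s)$-alcove emptiness criterion: if $w$ is a $(J,x,\s)$-alcove element and $\k_{\mathbb M_J}(x\i w\s(x))$ does not match $\k_{\mathbb M_J}(b')$ for any $b'\in[b]\cap\mathbb M_J(\breve F)$, then $X_w(b)=\emptyset$. Instead you wrote a proof of Theorem~\ref{main}, and in doing so you invoked Theorem~\ref{alcove-empty} itself as a black box in the necessity direction. This is circular with respect to the assignment: the target theorem is being assumed, not established.

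For completeness: in the paper Theorem~\ref{alcove-empty} is not proved internally; it is quoted from \cite[Corollary 3.6.1]{GHN}, with a footnote observing that the basicity hypothesis imposed in parts of \cite{GHN} is not actually needed for this corollary. A genuine blind proof would need to reproduce that argument. The core mechanism is the $P$-alcove structure: when $w$ is a $(J,x,\s)$-alcove element, the inclusion ${}^{\dot x}\mathbb U_{\mathbb P_J}(\breve F)\cap{}^{\dot w}\breve I\subseteq{}^{\dot x}\mathbb U_{\mathbb P_J}(\breve F)\cap\breve I$ lets one show, via an Iwahori-type decomposition relative to $\mathbb P_J$, that every element of $\breve I\dot w\breve I$ can be $\s$-conjugated by elements of $\breve I$ into $(\mathbb M_J(\breve F)\cap x\breve I x\i)\cdot x\dot{\underline{w}}\s(x)\i$ where $\underline{w}=x\i w\s(x)\in\tW_J$, and moreover the $\mathbb M_J$-Kottwitz invariant of the resulting element equals $\k_{\mathbb M_J}(\underline w)$. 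Hence $[b]\cap\breve I\dot w\breve I\neq\emptyset$ forces some $b'\in[b]\cap\mathbb M_J(\breve F)$ with $\k_{\mathbb M_J}(b')=\k_{\mathbb M_J}(\underline w)$; contrapositively, if no such $b'$ exists then $[b]\cap\breve I\dot w\breve I=\emptyset$, which is equivalent to $X_w(b)=\emptyset$. None of this appears in your proposal, so the required content is entirely missing.
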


\subsection{The emptiness pattern} Suppose that $w \mathfrak a$ is in a Shrunken Weyl chamber and $(\l_w^{\flat\mkern-2.4mu\flat})^\diamondsuit \ge \nu_b$. We write $w$ as $w=x t^\l y$ with $x, y \in W_0$ and $t^\l y \in {}^{\BS} \tW$. If $\k(b) \neq \k(w)$, then $X_w(b)=\emptyset$.  

Now suppose that $\k(b)=\k(w)$ and $\supp_\s(\s \i(y) x)\neq \BS$.  Set $J=\supp_\s(\s \i(y) x)$. By \cite[Lemma 3.6.3]{GHN}, $w$ is a $(J, \s \i(y), \s)$-alcove element. Let $b' \in [b] \cap \mathbb M_J(\breve F)$. We denote by $\nu^{\mathbb M_J}_{b'}$ the image of $b'$ under the Newton map for $\mathbb M_J$. Then $\nu^{\mathbb M_J}_{b'} \in W_0 (\nu_b)$. Hence $\nu_b-\nu^{\mathbb M_J}_{b'} \in \sum_{\a \in \D} \BQ_{\ge 0} \a^\vee$. 

By assumption, $\l^\diamondsuit-\nu_b \in \sum_{\a \in \D} \BQ_{>0} \a^\vee$. Thus $\l^\diamondsuit-\nu^{\mathbb M_J}_{b'} \in \sum_{\a \in \D} \BQ_{>0} \a^\vee$ and can not be written as a linear combination of the coroots in $\mathbb M_J$. Therefore $\k_{\mathbb M_J}(\s \i (y) w y \i) \neq \k_{\mathbb M_J}(b')$. By Theorem \ref{alcove-empty}, $X_w(b)=\emptyset$. 

\subsection{Dimension formula} Suppose that $\k(w)=\k(b)$ and $\supp_\s(\eta_\s(w))=\BS$. By Theorem \ref{rightarrow} (1), there exists a dominant coweight $\g \ge_\BZ \l_w^{\flat\mkern-2.4mu\flat}$ and $a \in W_0$ with $\supp_\s(a)=\BS$ such that $$w \Rightarrow_\s a t^\g.$$ 

By our assumption, $\g^\diamondsuit \ge (\l_w^{\flat\mkern-2.4mu\flat})^\diamondsuit \ge \nu_b$. By Theorem \ref{cordial-1}, $[\dot t^\g]=[b_{a t^\g}]$. Since $\k(w)=\k(t^\g)=\k(b)$, we have $[b] \le [\dot t^\g]$. 

By \cite[Theorem 2.27]{He-CDM}, $X_{a t^\g}(\dot \t) \neq \emptyset$, where $\t \in \Omega$ with $\k(w)=\k(t^\g)=\k(\t)$. Since $\k(w)=\k(b)$, we have $[\dot \t] \le [b]$. 

By Theorem \ref{cordial-1}, $a t^\g$ is a cordial element. Hence by Theorem \ref{cordial} (1), $X_{a t^\g}(b) \neq \emptyset$ and by Theorem \ref{cordial} (2), $\dim X_{a t^\g}(b)=d_{a t^\g}(b)$. 

So by definition of $\Rightarrow_{\s}$, we have $X_w(b) \neq \emptyset$ and $$\dim X_w(b)-d_w(b) \ge \dim X_{a t^\g}(b)-d_{a t^\g}(b)=0.$$ Hence $\dim X_w(b) \ge d_w(b)$. On the other hand, by Theorem \ref{dim-vir}, $\dim X_w(b) \le d_w(b)$. So $\dim X_w(b)=d_w(b)$. 

\subsection{Some remarks on the condition $(\l_w^{\flat\mkern-2.4mu\flat})^\diamondsuit \ge \nu_b$} We first consider the case where $[b]$ is basic. In this case, the condition $(\l_w^{\flat\mkern-2.4mu\flat})^\diamondsuit \ge \nu_b$ follows directly from the condition $\k(b)=\k(w)$. 

Now we consider the nonbasic $[b]$. Suppose that $\l_w^\diamondsuit \ge \nu_b+2 \rho^\vee$. In this case, although $\l_w-2 \rho^\vee$ may not be dominant, its $\s$-average is dominant and is larger than or equal to $\nu_b$. By definition, $\l_w^{\flat\mkern-2.4mu\flat}-(\l_w-\rho^\vee_J-\eta^\vee_J) \in \sum_{\a \in \D} \BQ_{\ge 0} \a^\vee$ for some $J$. Note that $2 \rho^\vee_J-\rho^\vee_J-\eta^\vee_J \in \sum_{\a \in \D} \BQ_{\ge 0} \a^\vee$. We have $\l_w^{\flat\mkern-2.4mu\flat}-(\l_w-2 \rho^\vee) \in \sum_{\a \in \Delta} \BQ_{\ge 0} \a^\vee$. Hence $(\l_w^{\flat\mkern-2.4mu\flat})^\diamondsuit \ge \l_w^\diamondsuit-2 \rho^\vee \ge \nu_b$. It is also easy to see that $\l_w^\diamondsuit-\nu_b \in \sum_{\a \in \D} \BQ_{>0} \a^\vee$. 

In particular, if $\l_w=n \o^\vee$, where $\o^\vee$ is a fundamental coweight and $n \gg 0$ with respect to $[b]$. Then $\l_w^\diamondsuit \ge \nu_b+2 \rho^\vee$ and hence the condition $(\l_w^{\flat\mkern-2.4mu\flat})^\diamondsuit \ge \nu_b$ is satisfied in this case. 

\begin{corollary}\label{7.4}
	Suppose that $\BG$ is simple and quasi-split over $F$. Let $b \in \breve G$ and $w \in \tW$ such that $w \mathfrak a$ is in a Shrunken Weyl chamber. Suppose that  $\l_w^\diamondsuit \ge \nu_b+2 \rho^\vee$. Then $X_w(b) \neq \emptyset$ if and only if $\k(b)=\k(w)$ and $\supp_\s(\eta_\s(w))=\BS$. In this case, $\dim X_w(b)=d_w(b)$. 
\end{corollary}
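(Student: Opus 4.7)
The plan is to deduce Corollary \ref{7.4} as an immediate consequence of Theorem \ref{main}. Both statements share the Shrunken Weyl chamber hypothesis and predict the same nonemptiness pattern and dimension formula, so it suffices to show that the single assumption $\l_w^\diamondsuit \ge \nu_b + 2\rho^\vee$ forces both remaining hypotheses of Theorem \ref{main}: (i) $\l_w^\diamondsuit - \nu_b \in \sum_{\a \in \D}\BQ_{>0}\a^\vee$, and (ii) $(\l_w^{\flat\mkern-2.4mu\flat})^\diamondsuit \ge \nu_b$.

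For (i) I would use the fact that $2\rho^\vee$, being the sum of all positive coroots of the simple group $\BG$, expands with strictly positive coefficients in the basis of simple coroots. Writing $\l_w^\diamondsuit - \nu_b = 2\rho^\vee + (\l_w^\diamondsuit - \nu_b - 2\rho^\vee)$ and noting that the bracketed term is a nonnegative combination of simple coroots by hypothesis, we obtain the required strict positivity.

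For (ii) I would unfold the definition $\l_w^{\flat\mkern-2.4mu\flat} = \l_w -_{\text{dom}}(\rho^\vee_J + \eta^\vee_J)$. The defining property of the normalized subtraction yields $\l_w^{\flat\mkern-2.4mu\flat} \ge \l_w - (\rho^\vee_J + \eta^\vee_J)$ in the $\ge$-order, so the crux is to establish $\rho^\vee_J + \eta^\vee_J \le 2\rho^\vee$. This I would deduce from the fact that both $\s$ and the opposition involution $-w_\BS$ act as permutations on the basis of fundamental coweights, so the formula $\eta^\vee_J = -w_\BS \s\i(\rho^\vee_J)$ forces $\eta^\vee_J = \rho^\vee_K$ for some $K \subset \BS$. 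Both $\rho^\vee_J$ and $\eta^\vee_J$ are then dominated by $\rho^\vee$, since for any $I \subset \BS$ the difference $\rho^\vee - \rho^\vee_I = \sum_{s \notin I}\omega_s^\vee$ is a nonnegative combination of simple coroots (the inverse Cartan matrix of an irreducible finite-type root system has nonnegative entries). Summing gives $\rho^\vee_J + \eta^\vee_J \le 2\rho^\vee$, hence $\l_w^{\flat\mkern-2.4mu\flat} \ge \l_w - 2\rho^\vee$ in the $\ge$-order. Taking $\s$-averages, using that $2\rho^\vee$ is $\s$-invariant and that the average of a nonnegative combination of simple coroots is itself nonnegative, we obtain $(\l_w^{\flat\mkern-2.4mu\flat})^\diamondsuit \ge \l_w^\diamondsuit - 2\rho^\vee \ge \nu_b$. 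With both hypotheses of Theorem \ref{main} in hand, its conclusion delivers the corollary.

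I do not anticipate a serious obstacle: the only check with any real content is the small diagram-combinatorial identity $\eta^\vee_J = \rho^\vee_K$, after which the rest is elementary dominance-order arithmetic of the kind already implicit in the discussion preceding the statement.
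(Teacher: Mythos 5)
Your proposal is correct and follows essentially the same route as the paper's own argument in the paragraph preceding Corollary \ref{7.4}: verify the two remaining hypotheses of Theorem \ref{main} by writing $\l_w^{\flat\mkern-2.4mu\flat} - (\l_w - 2\rho^\vee)$ as a sum of the nonnegative corrections $\l_w^{\flat\mkern-2.4mu\flat} - (\l_w - \rho^\vee_J - \eta^\vee_J)$ and $2\rho^\vee - \rho^\vee_J - \eta^\vee_J$, then take $\sigma$-averages. You actually spell out the dominance inequality $\rho^\vee_J + \eta^\vee_J \le 2\rho^\vee$ more carefully than the paper does (which writes ``$2\rho^\vee_J$'' where ``$2\rho^\vee$'' is clearly meant), reducing it to the standard facts that $-w_\BS$ and $\sigma$ permute the fundamental coweights and that $\omega_s^\vee$ is a nonnegative rational combination of simple coroots.
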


\subsection{A side remark} By Theorem \ref{mazur}, if $X_w(b) \neq \emptyset$, then $\k(b)=\k(w)$ and $\nu_b \le \l_w^\diamondsuit$. 

Let $w \in \tW$ such that $w \mathfrak a$ is in a Shrunken Weyl chamber. If $\supp_\s(\eta_\s(w))=\BS$, then Theorem \ref{main} describes the nonemptiness pattern and the dimension formula of $X_w(b)$ for most of the $\s$-conjugacy classes $[b]$ with $\k(b)=\k(w)$ and $\nu_b \le \l_w^\diamondsuit$. 

If $\supp_\s(\eta_\s(w))=J \subsetneqq \BS$, then by \cite[Lemma 3.6.3]{GHN}, $w$ is a $(J, \s \i(y), \s)$-alcove element for some $y \in W_0$. Then the Hodge-Newton decomposition (see \cite[Theorem 2.1.4]{GHKR2} for the split group and \cite[Propositon 2.5.1 \& Theorem 3.3.1]{GHN} in general) deduces the study of $X_w(b)$ to the study of a suitable affine Deligne-Lusztig variety associated to the Levi subgroup $\mathbb M_J$. One may apply Theorem \ref{main} to the latter one. In this way, one also obtains an explicit description of the nonemptiness pattern and the dimension formula of $X_w(b)$ for most of the $\s$-conjugacy classes $[b]$ with $\k(b)=\k(w)$ and $\nu_b \le \l_w^\diamondsuit$.

\end{document}